\newtheorem{theorem}{Theorem}[section]
\newtheorem{lemma}[theorem]{Lemma}
\newtheorem{proposition}[theorem]{Proposition}
\newtheorem{remark}[theorem]{Remark}
\newtheorem{corollary}[theorem]{Corollary}
\begin{document}

\title{\Large \textbf{Freeness of spherical Hecke modules of unramified $U(2, 1)$ in characteristic $p$}}
\date{}
\author{\textbf{Peng Xu}}
\maketitle

\begin{abstract}
Let $F$ be a non-archimedean local field of odd residue characteristic $p$. Let $G$ be the unramified unitary group $U(2, 1)(E/F)$ in three variables, and $K$ be a maximal compact open subgroup of $G$. For an irreducible smooth representation $\sigma$ of $K$ over $\overline{\mathbf{F}}_p$, we prove that the compactly induced representation $\text{ind}^G _K \sigma$ is free of infinite rank over the spherical Hecke algebra $\mathcal{H}(K, \sigma)$.
\end{abstract}

\section{Introduction}
\label{section: intro}

In the last two decades, the area of $p$-modular representations of $p$-adic reductive groups has already had a vast development. In their pioneering work (\cite{B-L95}, \cite{B-L94}), Barthel--Livn$\acute{\textnormal{e}}$ gave a classification of irreducible smooth $\overline{\mathbf{F}}_p$-representations of the group $GL_2$ over a local field of residue characteristic $p$, leaving the supersingular representations as a mystery. Almost ten years later, C. Breuil (\cite{Breuil03}) classified the supersingular representations of $GL_2 (\mathbf{Q}_p)$, which was one of the starting points of the mod-$p$ and $p$-adic local Langlands program, initiated and developed by Breuil and many other mathematicians (see \cite{Breuil10} for an overview). Recently, a classification of irreducible \emph{admissible} mod-$p$ representations of $p$-adic reductive groups has been obtained by Abe--Henniart--Herzig--Vign$\acute{\textnormal{e}}$ras (\cite{AHHV17}), as a generalization of many previous works due to these authors. However, our knowledge of the mysterious supersingular representations is still very limited, and from the work of Breuil and Pa$\check{\textnormal{s}}$k$\bar{\textnormal{u}}$nas (\cite{BP12}) it seems that a \lq classification' of that, even for the group $GL_2 (F)$ with $F\neq \mathbf{Q}_p$, is out of reach.

Smooth representations induced from maximal compact open subgroups, and their associated spherical Hecke algebras, are central objects in the study of $p$-modular representation theory of $p$-adic reductive groups, see \cite{B-L95}, \cite{Her2011b}, \cite{Abe-2011}, \cite{AHHV17}.  As such an induced representation is naturally a left module over its spherical Hecke algebra, a general question is to ask what we can say about the nature of this module ?  Certainly, we might not expect much without any restrictions on the group under consideration. The only general result in this direction, as far as we know, is due to Gro${\ss}$e-Kl\"{o}nne (\cite{GK14}), see Remark \ref{GK's universal modules} for a precise description of his result. In this note, we investigate this question for the unitary group in three variables.

We start to describe our result in detail. Let $E/F$ be an unramified quadratic extension of non-archimedean local fields with odd residue characteristic $p$. Let $G$ be the unitary group $U(2,1)(E/F)$ defined over $F$, and $K$ be a maximal compact open subgroup of $G$. For an irreducible smooth representation $\sigma$ of $K$ over $\overline{\mathbf{F}}_p$, the compactly induced representation $\textnormal{ind}^G _K \sigma$ is naturally a left module over the spherical Hecke algebra $\mathcal{H}(K, \sigma) :=\textnormal{End}_G (\textnormal{ind}^G _K \sigma)$. Our main result is as follows:

\begin{theorem}\label{main}(Theorem \ref{main theorem})

The compactly induced representation $\textnormal{ind}^G _K \sigma$ is a free module of infinite rank over $\mathcal{H}(K, \sigma)$.

\end{theorem}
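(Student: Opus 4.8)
The plan is to adapt Barthel--Livn\'{e}'s method for $GL_{2}$ (\cite{B-L95}) to the Bruhat--Tits tree of $G$. Since $E/F$ is unramified, $G=U(2,1)(E/F)$ has $F$-rank one, so its building $\mathcal{T}$ is a biregular tree with two $G$-orbits of vertices (of valencies $q^{3}+1$ and $q+1$, where $q$ is the residue cardinality of $F$); here $K$ is the stabiliser of a vertex $x_{0}$, so that $G/K$ is identified with the set of vertices of $\mathcal{T}$ of the same type as $x_{0}$, namely those lying at even distance from $x_{0}$. The first ingredient is the structure of the Hecke algebra: $\mathcal{H}(K,\sigma)$ is a polynomial ring $\overline{\mathbf{F}}_{p}[T]$ in one variable, where $T$ is the Hecke operator supported on the single double coset $K\beta K$, with $\beta$ carrying $x_{0}$ to a vertex of its own type at minimal distance $2$. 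In contrast to the $GL_{2}$ case, where the relevant unipotent subgroup is additive, here it is a Heisenberg group over $F$; this is responsible for most of the additional bookkeeping.

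I would next introduce the support filtration $M_{0}\subseteq M_{1}\subseteq\cdots$ of $\text{ind}^{G}_{K}\sigma$, where $M_{n}$ consists of the functions supported on the ball $B(x_{0},2n)$. Each $M_{n}$ is finite-dimensional ($\mathcal{T}$ is locally finite and the irreducible smooth $\sigma$ is finite-dimensional), $\bigcup_{n}M_{n}=\text{ind}^{G}_{K}\sigma$, and $T(M_{n})\subseteq M_{n+1}$ because $\beta$ displaces $x_{0}$ by $2$. The technical heart is a leading-term formula: for $f\in M_{n}$ supported on the sphere of radius $2n$ and for $y$ on the sphere of radius $2n+2$, one has $(Tf)(y)=\Phi\big(f(x)\big)$ modulo $M_{n}$, where $x$ is the unique same-type vertex at distance $2n$ on the geodesic from $x_{0}$ to $y$, and $\Phi$ is an explicit $\overline{\mathbf{F}}_{p}$-linear endomorphism of $\sigma$ (independent of $n$, after the evident identification of the fibres of $\text{ind}^{G}_{K}\sigma$ with $\sigma$) built from $\sigma(\beta)$ and the decomposition of $\sigma$ relative to the parahoric structure along the path that $\beta$ traverses. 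The hard part will be to prove that $\Phi$ is injective for every irreducible $\sigma$: this requires the characteristic-$p$ representation theory of the reductive quotients of the parahoric subgroups of $U(2,1)$ --- finite unitary groups over $\mathbf{F}_{q}$ and $\mathbf{F}_{q^{2}}$ --- together with a careful change-of-weight analysis across that path, and in particular the exclusion of the degenerate possibility $\Phi=0$, which would make $T$ behave like a nilpotent operator and would be fatal for freeness.

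Granting this, the induced map $\text{gr}_{n}(T)\colon M_{n}/M_{n-1}\to M_{n+1}/M_{n}$ (with $M_{-1}:=0$) is injective for every $n\ge 0$, since it is computed sphere-by-sphere by applying $\Phi$ to the values of $f$; hence $T$ is injective on $\text{ind}^{G}_{K}\sigma$, for if $Tf=0$ with $f\neq 0$ then the least $n$ with $f\in M_{n}$ contradicts injectivity of $\text{gr}_{n}(T)$. Finally I would deduce freeness: put $C_{0}:=M_{0}$ and, for $n\ge 0$, choose an $\overline{\mathbf{F}}_{p}$-vector space complement $C_{n+1}$ of $T(M_{n})$ in $M_{n+1}$; using injectivity of $T$ and $T(M_{n})\subseteq M_{n+1}$, an induction on $n$ gives $M_{n}=\bigoplus_{k=0}^{n}T^{\,n-k}C_{k}$, and hence $\text{ind}^{G}_{K}\sigma=\bigoplus_{k\ge 0}\bigoplus_{j\ge 0}T^{\,j}C_{k}$. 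Choosing an $\overline{\mathbf{F}}_{p}$-basis $B_{k}$ of each $C_{k}$, the set $\bigcup_{k\ge 0}B_{k}$ is then a basis of $\text{ind}^{G}_{K}\sigma$ as a module over $\mathcal{H}(K,\sigma)=\overline{\mathbf{F}}_{p}[T]$, so $\text{ind}^{G}_{K}\sigma$ is free of rank $\sum_{k\ge 0}\dim_{\overline{\mathbf{F}}_{p}}C_{k}=\sum_{k\ge 0}\dim_{\overline{\mathbf{F}}_{p}}\!\big(M_{k}/M_{k-1}\big)$. Since every sphere of even radius in $\mathcal{T}$ is nonempty and consists of vertices of the same type as $x_{0}$, each $C_{k}$ is nonzero, so this rank is infinite, which is the assertion. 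The remaining conjugacy class of maximal compact open subgroup is handled in exactly the same way, with $\beta$ and the parahoric data changed accordingly.
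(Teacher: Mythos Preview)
Your overall architecture---filtering $\text{ind}^G_K\sigma$ by balls $M_n$ in the tree, proving that the graded map $\mathrm{gr}_n(T)\colon M_n/M_{n-1}\to M_{n+1}/M_n$ is injective, and then assembling an $\overline{\mathbf{F}}_p[T]$-basis from complements $C_n$---is exactly the paper's strategy; its spaces $B_{n,\sigma}$, $C_{n,\sigma}$ are your $M_n$ and $M_n/M_{n-1}$, and its key Lemma (``if $f\in B_{n+1,\sigma}$ and $Tf\in B_{n+1,\sigma}$ then $f\in B_{n,\sigma}$'') is precisely the injectivity of $\mathrm{gr}_{n+1}(T)$. The basis-extraction and infinite-rank arguments are then the same.

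Where you diverge is in \emph{how} you propose to prove that injectivity, and this is the one place where your outline is genuinely incomplete. You plan to exhibit the leading term as $(Tf)(y)=\Phi(f(x))$ for an explicit $\Phi\in\mathrm{End}_{\overline{\mathbf{F}}_p}(\sigma)$ and then show $\Phi$ injective via the mod-$p$ representation theory of the finite reductive quotients of the two parahorics---this is the Barthel--Livn\'e route, and you correctly flag it as the ``hard part'' without carrying it out. The paper sidesteps this entirely. It observes that the putative kernel of $\mathrm{gr}_{n+1}(T)$ inside the sphere space $C_{n+1,\sigma}$ is $I_K$-stable (since $T$ is $G$-equivariant and the filtration is $K$-stable), hence by the pro-$p$ argument has nonzero $I_{1,K}$-invariants; but $C_{n+1,\sigma}^{I_{1,K}}$ is only two-dimensional, spanned by explicit functions $f_{-(n+1)}$ and $f_{n+1}$, and a short calculation gives $Tf_{\pm(n+1)}=c_{\pm}\,f_{\pm(n+1)}+f_{\pm(n+2)}$ with leading coefficient $1$, so no nonzero invariant lies in the kernel. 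This reduces the crux to a two-dimensional check and completely avoids the finite-group analysis of an endomorphism of $\sigma$; the paper itself remarks that this is ``essentially different'' from the explicit-basis method of \cite{B-L95} and is what makes the $U(2,1)$ case go through uniformly in $\sigma$.
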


Theorem \ref{main} is an analogue to a theorem of Barthel--Livn$\acute{\textnormal{e}}$ (\cite[Theorem 19]{B-L95}) for the group $GL_2 (F)$, and we follow their approach in general. The underlying idea is na\"{i}ve and depends on an analysis of the Bruhat--Tits tree of the group $G$. However, there is an essential difference when we prove a key ingredient (Lemma \ref{key lemma for freeness}) in our case, where we find a more conceptual approach and reduce it to some simple computations on the tree of $G$.

Our freeness result has some natural applications, and we record some of them in this note.

The first one is that every non-trivial spherical universal module of $G$ is infinite dimensional (Corollary \ref{inf dim}), which at least implies the existence of supersingular representations of $G$ containing a given irreducible smooth representation of $K$. The existence of supersingular representations, to our knowledge, was only proved very recently for most simple adjoint $p$-adic group (\cite{V2017}).

Next, following Gro${\ss}$e-Kl\"{o}nne (\cite[section 9]{GK14}), we apply Theorem \ref{main} to investigate integral structures in certain $p$-adic locally algebraic representations of $G$, and we formulate a conditional result for irreducible tamely ramified principal series (Theorem \ref{invariant norm}).

This note is organized as follows. In section \ref{section: notation}, we set up the general notations and review some necessary background on the group $G$ and its Bruhat--Tits tree. In section \ref{section: spherical hecke}, we study the Hecke operator $T$ in detail, and describe the image of certain invariant subspace of $\textnormal{ind}^G _K \sigma$ under $T$. In section \ref{section: proof of main}, we prove our main result. In section \ref{section: invariant norm}, we apply our main result to investigate $G$-invariant norms in certain local algebraic representations of $G$. In the final appendix \ref{section: proof of 3.4}, we provide a detail proof of the recursion relations in the spherical Hecke algebra of $G$.

\section{Notations and Preliminaries}\label{section: notation}

Let $F$ be a non-archimedean local field of odd residue characteristic $p$, with ring of integers $\mathfrak{o}_F$ and maximal ideal $\mathfrak{p}_F$, and let $k_F$ be its residue field of cardinality $q=p^f$. Fix a separable closure $F_s$ of $F$. Let $E$ be the unramified quadratic extension of $F$ in $F_s$. We use similar notations $\mathfrak{o}_E$, $\mathfrak{p}_E$, $k_E$ for analogous objects of $E$. Let $\varpi_{E}$ be a uniformizer
of $E$, lying in $F$. Given a 3-dimensional vector space $V$ over $E$, we identify it with $E^{3}$ (the usual column space in three variables), by fixing a basis of $V$.
Equip $V$ with the non-degenerate Hermitian form h:
\begin{center}
 $\text{h}: V \times V \rightarrow E$,
$(v_1, v_2) \mapsto v_1 ^\textnormal{T}\beta \overline{v_2}, v_1, v_2\in V$.
\end{center}
Here, $-$ denotes the non-trivial Galois conjugation on $E/F$, inherited by $V$, and
$\beta$ is the matrix
\begin{center}
$\begin{pmatrix} 0  & 0 & 1\\
0  & 1 & 0\\
1 & 0 & 0
\end{pmatrix}$
\end{center}

The unitary group $G$ is the subgroup of $GL(3, E)$ whose elements fix the Hermitian form h:

\begin{center}
$G=\{g\in \textnormal{GL}(3, E)\mid \textnormal{h}(gv_{1}, gv_{2})= \textnormal{h}(v_{1}, v_{2}), \textnormal{for~any}~v_{1}, v_{2}~\in V\}.$
\end{center}

Let $B=HN$ (resp, $B'= HN'$) be the subgroup of upper (resp, lower) triangular matrices of $G$, where $N$ (resp, $N'$) is the unipotent radical of $B$ (resp, $B'$) and $H$ is the diagonal subgroup of $G$. Denote an element of the following form in $N$ and $N'$ by $n(x, y)$ and $n'(x, y)$ respectively:
\begin{center}
$\begin{pmatrix}  1 & x & y  \\ 0 & 1 & -\bar{x}\\
0 & 0 & 1
\end{pmatrix}$, ~
$\begin{pmatrix}  1 & 0 & 0   \\ x & 1 & 0\\
y & -\bar{x} & 1
\end{pmatrix}$,
\end{center}
where $(x, y)\in E^2$ satisfies $x\bar{x}+ y+ \bar{y}=0$. Denote by $N_k$ (resp, $N'_k$), for any $k\in \mathbb{Z}$, the subgroup of $N$ (resp, $N'$) consisting of $n(x, y)$ (resp, $n'(x, y)$) with $y\in \mathfrak{p}^{k}_E$. For $x\in E^\times$, denote by $h(x)$ an element in $H$ of the following form:
\begin{center}
$\begin{pmatrix}  x & 0 & 0  \\ 0 & -\bar{x}x^{-1} & 0\\
0 & 0 & \bar{x}^{-1}
\end{pmatrix}.$
\end{center}

We record the following useful identity in $G$: for $y\neq 0$,
\begin{equation}\label{useful identity}
\beta n(x, y)= n(\bar{y}^{-1}x, y^{-1})\cdot h(\bar{y}^{-1})\cdot n'(-\bar{y}^{-1}\bar{x}, y^{-1}).
\end{equation}

\medskip
Up to conjugacy, the group $G$ has two maximal compact open subgroups $K_0$ and $K_1$ (\cite{Hij63},\cite{Tits79}), which are given by:
\begin{center}
$K_0= \begin{pmatrix}  \mathfrak{o}_E & \mathfrak{o}_E & \mathfrak{o}_E  \\ \mathfrak{o}_E  & \mathfrak{o}_E & \mathfrak{o}_E\\
\mathfrak{o}_E & \mathfrak{o}_E & \mathfrak{o}_E
\end{pmatrix}\cap G, ~K_1= \begin{pmatrix}  \mathfrak{o}_E & \mathfrak{o}_E & \mathfrak{p}^{-1}_E  \\ \mathfrak{p}_E  & \mathfrak{o}_E & \mathfrak{o}_E\\
\mathfrak{p}_E & \mathfrak{p}_E & \mathfrak{o}_E
\end{pmatrix}\cap G.$
\end{center}
The maximal normal pro-$p$ subgroups of $K_0$ and $K_1$ are respectively:

$K^1 _0= 1+\varpi_E M_3 (\mathfrak{o}_E)\cap G, ~K^1 _1= \begin{pmatrix}  1+\mathfrak{p}_E & \mathfrak{o}_E & \mathfrak{o}_E  \\ \mathfrak{p}_E  & 1+\mathfrak{p}_E & \mathfrak{o}_E\\
\mathfrak{p}^2_E & \mathfrak{p}_E & 1+\mathfrak{p}_E
\end{pmatrix}\cap G.$

Let $\alpha$ be the following diagonal matrix in $G$:
\[ \begin{matrix}\begin{pmatrix} \varpi_{E}^{-1}  & 0 & 0  \\ 0  & 1 & 0\\
0 & 0 & \varpi_{E}
\end{pmatrix}
\end{matrix} ,\]
and put $\beta'=\beta \alpha^{-1}$. Note that $\beta\in K_0$ and $\beta'\in K_1$.

 Let $K$ be one of the two maximal compact open subgroups of $G$ above, and $K^1$ be the maximal normal pro-$p$ subgroup of $K$. We identify the finite group $\Gamma_K= K/K^1$ with the $k_F$-points of an algebraic group defined over $k_F$, denoted also by $\Gamma_K$: when $K$ is $K_0$, $\Gamma_K$ is $U(2, 1)(k_E /k_F)$, and when $K$ is $K_1$, $\Gamma_K$ is $U(1, 1)\times U(1)(k_E /k_F)$. Let $\mathbb{B}$ (resp, $\mathbb{B}'$) be the upper (resp, lower) triangular subgroup of $\Gamma_K$, and $\mathbb{U}$ (resp, $\mathbb{U}'$) be its unipotent radical. The Iwahori subgroup $I_K$ (resp, $I'_K$) and pro-$p$ Iwahori subgroup $I_{1, K}$ (resp, $I'_{1, K}$) in $K$ are the preimages of $\mathbb{B}$ (resp, $\mathbb{B}'$) and $\mathbb{U}$ (resp, $\mathbb{U}'$) in $K$. We have the following Bruhat decomposition for $K$:
 \begin{center}
 $K= I \cup I \beta_K I$,
 \end{center}
where $\beta_K$ denotes the unique element in $K\cap \{\beta, \beta'\}$, $I$ is either $I_K$ or $I'_K$.
\medskip

We end this part by recalling some facts on the Bruhat--Tits tree $\triangle$ of $G$. Denote by $X_0$ the set of vertices of $\triangle$, which consists of all $\mathfrak{o}_E$-lattices $\mathcal{L}$ in $E^3$, such that

\begin{center}
$\varpi_E \mathcal{L} \subseteq\mathcal{L}^\ast \subseteq \mathcal{L}$,
\end{center}
where $\mathcal{L}^\ast$ is the dual lattice of $\mathcal{L}$ under the Hermitian form $h$, i.e., $\mathcal{L}^\ast= \{v\in E^3\mid h(v, \mathcal{L})\in \mathfrak{p}_E\}$.

 Let $\mathbf{v}, ~\mathbf{v}'$ be two vertices in $X_0$ represented by $\mathcal{L}$ and $\mathcal{L}'$.  The vertices $\mathbf{v}$ and $\mathbf{v}'$ are \emph{adjacent}, if:
 \begin{center}
 $ \mathcal{L}'\subset \mathcal{L}$ or $\mathcal{L}\subset \mathcal{L}'$.
 \end{center}
When $\mathbf{v}$ and $\mathbf{v}'$ are adjacent, we have the edge $(\mathbf{v}, \mathbf{v}')$ on the tree.

Let $\{e_{-1}, e_{0}, e_{1}\}$ be the standard basis of $E^{3}$. We consider the following two lattices in $E^{3}$:

\begin{center}
$\mathcal {L}_{0}=\mathfrak{o}_{E}e_{-1}\oplus\mathfrak{o}_{E}e_{0}\oplus\mathfrak{o}_{E}e_{1}$, ~$\mathcal
{L}_{1}=\mathfrak{o}_{E}e_{-1}\oplus\mathfrak{o}_{E}e_{0}\oplus\mathfrak{p}_{E}e_{1}.$
\end{center}
Denote respectively by $\mathbf{v}_{0}, \mathbf{v}_{1}$ the vertices represented by $\mathcal {L}_{0}$ and $\mathcal
{L}_{1}$, which are then adjacent. The group $G$ acts on $X_0$ in a natural way with two orbits, i.e.,
\begin{center}
$X_0= \{G \cdot\mathbf{v}_0\}\cup \{G \cdot\mathbf{v}_1\}$.
\end{center}
For $i=0, 1$, the stabilizer of $\mathbf{v}_i$ in $G$ is exactly the maximal open compact subgroup $K_i$, and the stabilizer of the edge $(\mathbf{v}_0, \mathbf{v}_1)$ is the intersection $K_0 \cap K_1$.

 For a vertex $\mathbf{v}\in X_0$, the number of vertices adjacent to $\mathbf{v}$ is equal to $q^{c_\mathbf{v}} +1$, where $c_\mathbf{v}$ is either $3$ or $1$, depending on $\mathbf{v}\in \{G \cdot\mathbf{v}_0\}$ or $\{G \cdot\mathbf{v}_1\}$. For a maximal compact open subgroup $K$, we will write $c_K$ for $c_{\mathbf{v}}$, if $\mathbf{v}$ is the unique vertex on the tree stabilized by $K$.

\medskip
Unless otherwise stated, all the representations of $G$ and its subgroups considered in this note are smooth over $\overline{\mathbf{F}}_p$.

\section{The spherical Hecke operator $T$}\label{section: spherical hecke}

\subsection{The spherical Hecke algebra $\mathcal{H}(K, \sigma)$}

 Let $K$ be a maximal compact open subgroup of $G$, and $(\sigma, W)$ be an irreducible smooth representation of $K$. As $K^1$ is pro-$p$, $\sigma$ factors through the finite group $\Gamma_K= K/K^1$, i.e., $\sigma$ is the inflation of an irreducible representation of $\Gamma_K$.

  It is well-known that $\sigma^{I_{1,K}}$ and $\sigma_{I'_{1,K}}$ are both one-dimensional, and that the natural composition map $\sigma^{I_{1,K}}\hookrightarrow \sigma \twoheadrightarrow \sigma_{I'_{1,K}}$ is non-zero, i.e., an isomorphism of vector spaces (\cite[Theorem 6.12]{C-E2004}). Denote by $j_\sigma$ the inverse of the composition map just mentioned. For $v\in \sigma^{I_{1,K}}$, we have $j_\sigma (\bar{v})= v$, where $\bar{v}$ is the image of $v$ in $\sigma_{I'_{1,K}}$. When viewed as a map in $\text{Hom}_{\overline{\mathbf{F}}_p}(\sigma, \sigma^{I_{1, K}})$, the $j_\sigma$ factors through $\sigma_{I'_{1,K}}$, i.e., it vanishes on $\sigma (I'_{1,K})$.

\begin{remark}\label{value of lambda}
There is a unique constant $\lambda_{\beta_K, \sigma}\in \overline{\mathbf{F}}_p$, such that $\beta_K \cdot v-\lambda_{\beta_K, \sigma}v\in \sigma (I'_{1,K})$, for $v\in \sigma^{I_{1,K}}$. The value of $\lambda_{\beta_K, \sigma}$ is known: it is zero unless $\sigma$ is a character (\cite[Proposition 3.16]{H-V2011}), due to the fact that $\beta_K\notin I_K\cdot I'_K$. When $\sigma$ is a character, $\lambda_{\beta_K, \sigma}$ is just the scalar $\sigma (\beta_K)$.
\end{remark}

\begin{remark}\label{n_K and m_K}
There are unique integers $n_K$ and $m_K$ such that $N\cap I_{1, K}= N_{n_K}$ and $N'\cap I_{1, K}=N'_{m_K}$.
\end{remark}

Let $\text{ind}_K ^{G}\sigma$ be the compactly induced smooth representation, i.e., the representation of $G$ with underlying space $S(G, \sigma)$
\begin{center}
$S(G, \sigma)=\{f: G\rightarrow W\mid  f(kg)=\sigma (k)\cdot f(g),~\text{for~any}~k\in K~\text{and}~g\in G,~ \text{locally~constant~with~compact~support}\}$
\end{center}
and $G$ acting by right translation. In this note, we will sometimes call $\text{ind}_K ^{G}\sigma$ a \emph{maximal compact induction}.

As usual (\cite[section 2.3]{B-L95}), denote by $[g, v]$ the function in $S(G, \sigma)$, supported on $Kg^{-1}$ and having value $v\in W$ at $g^{-1}$.  An element $g'\in G$ acts on the function $[g, v]$ by $g'\cdot[g, v]= [g'g, v]$, and we have $[gk, v]= [g, \sigma(k)v]$ for $k\in K$.

The spherical Hecke algebra $\mathcal{H}(K, \sigma)$ is defined as $\text{End}_G (\text{ind}^G _K \sigma)$, and by \cite[Proposition 5]{B-L95} it is isomorphic to the convolution algebra $\mathcal{H}_K (\sigma)$ of all compactly support and locally constant functions $\varphi$ from $G$ to $\text{End}_{\overline{\mathbf{F}}_p}(\sigma)$, satisfying $\varphi(kgk')=\sigma(k)\varphi(g)\sigma(k')$ for any $g\in G$ and $k, k'\in K$. Let $\varphi$ be the function in $\mathcal{H}_K (\sigma)$, supported on $K\alpha K$, and satisfying $\varphi (\alpha)= j_\sigma$. Denote by $T$ the Hecke operator in $\mathcal{H}(K, \sigma)$, which corresponds to the function $\varphi$, via the isomorphism between $\mathcal{H}_K (\sigma)$ and  $\mathcal{H}(K, \sigma)$.

When $K$ is hyperspecial, the following proposition is a special case of a theorem of Herzig (\cite{Her2011a}).

\begin{proposition}
The algebra $\mathcal{H}(K, \sigma)$ is isomorphic to $\overline{\mathbf{F}}_p [T]$.
\end{proposition}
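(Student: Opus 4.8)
The plan is to prove that $\mathcal{H}(K,\sigma)$ is a commutative polynomial ring in the single Hecke operator $T$ by first pinning down the support of an arbitrary element of the Hecke algebra, and then showing that $T$, suitably iterated, realizes every double coset in the support. First I would recall from the Cartan decomposition of $G$ (with respect to the maximal compact $K$) that $G$ is the disjoint union of the double cosets $K\alpha^n K$ for $n\geq 0$, together with $K$ itself — more precisely, the relevant elementary divisor types along the geodesic through the fixed vertex of $K$ are exactly powers of $\alpha$, since the unitary group has relative rank one. Thus any $\varphi\in\mathcal{H}_K(\sigma)$ is a finite sum $\varphi=\sum_{n\geq 0}\varphi_n$ where $\varphi_n$ is supported on $K\alpha^n K$. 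So as a $\overline{\mathbf{F}}_p$-vector space, $\mathcal{H}_K(\sigma)$ has a basis indexed by those $n$ for which the space of functions supported on $K\alpha^n K$ with the bi-equivariance $\varphi(k g k')=\sigma(k)\varphi(g)\sigma(k')$ is non-zero; by Frobenius reciprocity this space is $\mathrm{Hom}_{K\cap \alpha^n K\alpha^{-n}}(\sigma, \sigma^{\alpha^n})$, and using the Iwahori factorization / the fact that $\sigma^{I_{1,K}}$ and $\sigma_{I'_{1,K}}$ are one-dimensional one checks this Hom-space is at most one-dimensional, and is exactly one-dimensional for every $n\geq 0$ (the map $j_\sigma$ twisted by $\alpha^n$ does the job, essentially because $\alpha$ normalizes the relevant torus and contracts $N'$, expands $N$).

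Next I would identify the powers of $T$ with the basis elements. The key computational input is the recursion relation in $\mathcal{H}(K,\sigma)$ — the very relations whose proof is deferred to the appendix — expressing $T^{n}$ in terms of the basis element $T_n$ supported on $K\alpha^n K$ plus a $\overline{\mathbf{F}}_p$-linear combination of $T_m$ with $m<n$. Concretely, one computes the convolution $T\ast T_n$ by decomposing $K\alpha K\cdot K\alpha^n K$ into $K$-double cosets on the tree: the product of the "sphere of radius $1$" operator with the "sphere of radius $n$" operator yields the sphere of radius $n+1$ with coefficient a nonzero scalar (coming from the value $\lambda_{\beta_K,\sigma}$ or, when $\sigma$ is not a character, from the interior geometry of the tree and the non-vanishing of $j_\sigma$), plus lower-radius terms. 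Granting that the leading coefficient is a unit in $\overline{\mathbf{F}}_p$, an immediate upward induction shows $\{1,T,T^2,\ldots\}$ is a basis of $\mathcal{H}_K(\sigma)$, hence the natural map $\overline{\mathbf{F}}_p[X]\to\mathcal{H}(K,\sigma)$, $X\mapsto T$, is an isomorphism of $\overline{\mathbf{F}}_p$-algebras; commutativity is automatic since the image is generated by one element.

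I expect the main obstacle to be verifying that the leading coefficient in the recursion $T\ast T_n = (\text{unit})\cdot T_{n+1}+(\text{lower})$ is genuinely nonzero, uniformly in $n$ and for all irreducible $\sigma$ (both the $c_K=3$ and $c_K=1$ cases, and both $\sigma$ a character and $\sigma$ not a character). This is where the one-dimensionality of $\sigma^{I_{1,K}}$, the explicit map $j_\sigma$, the identity \eqref{useful identity}, and a careful bookkeeping of which coset representatives $\alpha^n n(x,y)$ survive after applying $j_\sigma$ all come together; it amounts to checking that a certain sum over a $p$-group of unipotent representatives, weighted by $\sigma$, does not vanish mod $p$ — which is exactly the content of the appendix recursion computation. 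A secondary but lighter obstacle is the bookkeeping of the Cartan decomposition itself for the non-hyperspecial $K_1$, where the stabilizer geometry on the tree is slightly different ($c_{K_1}=1$), but the rank-one structure still forces the double cosets to be linearly ordered, so the same inductive scheme applies verbatim once the leading term is controlled.
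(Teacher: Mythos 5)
Your proposal follows essentially the same route as the paper: use the Cartan decomposition to reduce to showing that the space of bi-$\sigma$-equivariant functions supported on each $K\alpha^n K$ is (at most) one-dimensional via conjugation by $\alpha^{\pm n}$, and then invoke the recursion $T\ast T_n = c\,T_n + T_{n+1}$ (proved in the appendix) to conclude that the powers of $T$ give a basis. One small correction to your discussion of the "main obstacle": in the paper the leading coefficient in that recursion is exactly $1$, arising from $j_\sigma^2 = j_\sigma$ in the term indexed by the trivial coset representative, rather than from $\lambda_{\beta_K,\sigma}$ (which is zero unless $\sigma$ is a character); the sums over unipotent $p$-groups are used to kill the unwanted terms rather than to establish the nonvanishing, but this does not affect the logic of your argument.
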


\begin{proof}
Here, we give a straightforward proof by explicit computations, and the recursion relations in the algebra will be used later.

It suffices to consider the algebra $\mathcal{H}_K (\sigma)$.
Recall the Cartan decomposition of $G$:
\begin{center}
$G=\bigcup_{n\geq 0}~K \alpha^n K$.
\end{center}

Let $\varphi$ be a function in $\mathcal{H}_K (\sigma)$, supported on the double coset $K \alpha^n K$. Then, for any $k_1, ~k_2\in K$, satisfying $k_1 \alpha^n= \alpha^n k_2$, we are given $\sigma(k_1)\varphi(\alpha^n)=\varphi(\alpha^n)\sigma(k_2)$. When $n=0$, $\varphi(Id)$ commutes with all $\sigma(k)$. As $\sigma$ is irreducible, by Schur's Lemma $\varphi(Id)$ is a scalar.

For $n>0$, let $k_1\in N'_{2n-1+ m_K}$. As $k_1\in K^1$, $\sigma(k_1)=1$. Now $k_2=\alpha^{-n}k_1 \alpha^n \in N'_{m_K -1}$, and we have $\varphi(\alpha^n)=\varphi(\alpha^n)\cdot \sigma(k_2)$. We see $\varphi(\alpha^n)$ factors through $\sigma_{I'_{1, K}}$. Similarly, for $k_1\in N_{n_K}$, $k_2= \alpha^{-n}k_1 \alpha^n\in K^1$, and we get $\sigma(k_1)\varphi(\alpha^n)=\varphi(\alpha^n)$,  that is to say $\text{Im}(\varphi(\alpha^n))\in \sigma^{I_{1, K}}$. In other words, $\varphi(\alpha^n)$ only differs from $j_{\sigma}$ by a scalar.

For $n\geq 0$, let $\varphi_n$ be the function in $\mathcal{H}_K (\sigma)$, supported on $K \alpha^n K$, determined by its value on $\alpha^n$: $\varphi_0(Id)=Id, ~\varphi_{n}(\alpha^n)= j_{\sigma}, ~n> 0$.

\begin{proposition}\label{basis for h_k_0}
$\{\varphi_n\}_{n\geq 0}$ consists of a basis of  $\mathcal{H}_K (\sigma)$, and they satisfy the following convolution relations: for $n\geq 1,~ l\geq 0$,
\begin{equation}
\varphi_1 \ast \varphi_{n}(\alpha^l)=\begin{cases}
0, ~~~~~~~~~l\neq n,~n+1;\\
c\cdot j_{\sigma}, ~~~l=n;\\
j_{\sigma}, ~~~~~~~l=n+1,
\end{cases}
\end{equation}
where $c$ is some constant depending on $\sigma$.
\end{proposition}
\begin{proof}
 The convolution formulae in the proposition give that $\varphi_1 \ast \varphi_{n}= c\cdot \varphi_n+ \varphi_{n+1}$, which will matter to us later. In particular, it follows that the algebra $\mathcal{H}_K (\sigma)$ is commutative. We leave the proof to the appendix \ref{section: proof of 3.4}.
\end{proof}

Denote by $T_n$ the operator in $\mathcal{H}(K, \sigma)$ which corresponds to $\varphi_n$. We then have similar composition of relations among $\{T_n\}_{n\geq 0}$, namely
\begin{center}
$T_1 \cdot T_n = c\cdot T_n + T_{n+1}$,
\end{center}
and the assertion in the proposition follows.
\end{proof}

\subsection{The formula $T[Id, v_0]$}

Let $v$ be a vector in $V$, and by \cite[(8)]{B-L95} we have
\begin{equation}\label{general formular for T}
T~[Id, v]=\sum_{g\in G/K} [g, \varphi(g^{-1})\cdot v].
\end{equation}
As $\varphi$ is supported on the double coset $K\alpha K=K\alpha^{-1} K$, we decompose $K\alpha^{-1} K$ into right cosets of $K$:
\begin{center}
$K\alpha^{-1} K= \bigcup_{k\in K/ (K\cap \alpha^{-1}K\alpha)} k\alpha^{-1}K$,
\end{center}
and we need to identify $K/ (K\cap \alpha^{-1}K\alpha)$ with some simpler set. Note firstly that $I'_K$ contains $K\cap \alpha^{-1}K\alpha$, and that
\begin{center}
$I'_K / (K\cap \alpha^{-1}K\alpha) \cong N_{n_K +1}/N_{n_K +2}$,
\end{center}
where, as mentioned in Remark \ref{n_K and m_K}, $n_K$ is the unique integer such that $N\cap I_{K}= N_{n_K}$.

Secondly, we note the coset decomposition of $K$ with respect to $I'_K$:
\begin{center}
$K= I'_K \cup \bigcup_{u\in N_{n_K}/N_{n_K +1}}\beta_K u I'_K$.
\end{center}
In all, we may identify $K/ (K\cap \alpha^{-1}K\alpha)$ with the following set
\begin{center}
$(N_{n_K +1}/N_{n_K +2})\bigcup \beta_K\cdot (N_{n_K}/N_{n_K +2})$,
\end{center}
and hence with the following set:
\begin{center}
$\beta_K\cdot (N_{n_K +1}/N_{n_K +2})\bigcup (N_{n_K}/N_{n_K +2})$.
\end{center}

 Using the previous identification, the above equation \eqref{general formular for T} becomes:

\begin{equation}\label{explicit form for T}
T~[Id, v]=\sum_{u\in N_{n_K +1}/N_{n_K+ 2}}~[\beta_K u\alpha^{-1}, j_{\sigma}\sigma(\beta_K) v]+ \sum_{u\in N_{n_K}/N_{n_K+ 2}} [u \alpha^{-1}, j_{\sigma}\sigma (u^{-1})v],
\end{equation}
where we note that $N_{n_K +1} \subset K^1$.

\medskip
Recall we have a Cartan--Iwahori decomposition:
\begin{equation}\label{C-I decom}
G= \bigcup_{n\in \mathbb{Z}}~K \alpha^n I_{1, K}.
\end{equation}

Based on \eqref{C-I decom}, we may describe the $I_{1, K}$-invariant subspace of $\text{ind}_{K}^G \sigma$. By Frobenius reciprocity and an argument like that of \cite[Proposition 5]{B-L95}, we have $(\text{ind}_{K}^G \sigma)^{I_{1, K}}=\{f\in S(G, \sigma) \mid  f(kgi)=\sigma(k)f(g),~\text{for}~k\in K, g\in G, i\in I_{1, K}\}$. Let $f$ be a function in $(\text{ind}_{K}^G \sigma)^{I_{1, K}}$, supported in $K \alpha^n I_{1, K}$, so $f$ is determined by its value at $\alpha^n$. For any $k \in K, ~i\in I_{1, K}$ such that $k\alpha^n =\alpha^n i$, $f(\alpha^n)$ should satisfy $\sigma(k)f(\alpha^n)=f(\alpha^n)$.

For $n\geq 0$, and $u\in N_{n_K}$, we get $\sigma(u)f(\alpha^n)=f(\alpha^n)$, that is to say $f(\alpha^n)$ is fixed by $N_{n_K}$. Similarly, for a negative $n$, and $u'\in N\cap I'_{1, K}= N'_{m_K -1}$, we have $\sigma(u')f(\alpha^n)=f(\alpha^n)$, which implies that $f(\alpha^n)$ is fixed by $N\cap I'_{1, K}$. Note that $\sigma^{I_{1, K}}= \sigma^{N_{n_K}}$ and $\sigma^{I'_{1, K}}= \sigma^{N'_{m_K -1}}$, as $I_{1, K}=N_{n_K}\cdot K^1$ and $I'_{1, K}=N'_{m_K -1}\cdot K^1$, where $K^1$ acts trivially on $\sigma$.

\medskip
Recall that the subspace of $I_{1, K}$-invariants in $\sigma$ is one-dimensional, and from now on \emph{we fix a non-zero $v_0\in \sigma^{I_{1, K}}$ throughout this note.}
\medskip

Let $f_n$ be the function in $(\textnormal{ind}_K ^G \sigma)^{I_{1, K}}$, supported on $K \alpha^{-n} I_{1, K}$, such that
\begin{center}
$f_n (\alpha^{-n})=  \begin{cases}
\beta_K\cdot v_0, ~~~~~~~n>0,\\
v_0 ~~~~~~~~~~~~~~~~~n\leq 0.
\end{cases}$
\end{center}
By the remarks above, we have:
\begin{lemma}\label{basis of I_1 of compact induction}
~~The set of functions $\{f_n\mid ~n\in \mathbb{Z}\}$ consists of a basis of the $I_{1, K}$-invariants of the maximal compact induction $\textnormal{ind}^{G} _K\sigma$.
\end{lemma}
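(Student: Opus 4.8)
The plan is to argue directly from the Cartan–Iwahori decomposition \eqref{C-I decom} together with the Frobenius-reciprocity description of $(\textnormal{ind}_K^G\sigma)^{I_{1,K}}$ recorded just above. First I would observe that the double cosets $K\alpha^n I_{1,K}$, $n\in\mathbf{Z}$, are pairwise disjoint: this follows from the Cartan decomposition $G=\bigcup_{n\ge 0}K\alpha^n K$ together with the fact that $\alpha^n\notin K$ for $n\neq 0$ and that $I_{1,K}\subset K$, so that $K\alpha^n I_{1,K}\subset K\alpha^{|n|}K$ and the latter cosets are disjoint (the two signs $\pm n$ are separated because $\alpha^{-n}=\beta\alpha^n\beta^{-1}$ forces them into distinct $K$-double cosets once one keeps track of the lattice-homothety invariants, equivalently of the factor $\det$; in any case this is the standard indexing of the tree). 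Consequently a function $f\in(\textnormal{ind}_K^G\sigma)^{I_{1,K}}$ is uniquely written as a finite sum $f=\sum_n f^{(n)}$ where $f^{(n)}$ is the restriction of $f$ to $K\alpha^{-n}I_{1,K}$, and $f^{(n)}$ lies again in the $I_{1,K}$-invariants because this subspace is stable under multiplication by the characteristic function of a $(K,I_{1,K})$-double coset.

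Next I would pin down, for each fixed $n$, the space of $f\in(\textnormal{ind}_K^G\sigma)^{I_{1,K}}$ supported on the single double coset $K\alpha^{-n}I_{1,K}$. Such an $f$ is determined by the single value $f(\alpha^{-n})\in W$, and the only constraint is the compatibility condition $\sigma(k)f(\alpha^{-n})=f(\alpha^{-n})$ whenever $k\in K$ satisfies $k\alpha^{-n}\in\alpha^{-n}I_{1,K}$, i.e.\ $k\in K\cap\alpha^{-n}I_{1,K}\alpha^{n}$. I would compute this intersection exactly as in the discussion preceding the lemma: for $n\le 0$ conjugating $I_{1,K}$ by $\alpha^{-n}$ expands the $N$-part and contracts the $N'$-part, so $K\cap\alpha^{-n}I_{1,K}\alpha^{n}\supseteq N_{n_K}K^1$, whence $f(\alpha^{-n})$ must lie in $\sigma^{N_{n_K}}=\sigma^{I_{1,K}}$, a one-dimensional space; for $n>0$ the roles of $N$ and $N'$ swap, so $f(\alpha^{-n})$ must lie in $\sigma^{N'_{m_K-1}}=\sigma^{I'_{1,K}}$, again one-dimensional, and a natural generator of the latter is $\beta_K\cdot v_0$ (since $\beta_K$ normalises $K^1$ and swaps $I_{1,K}$ with $I'_{1,K}$, it carries the line $\sigma^{I_{1,K}}=\overline{\mathbf{F}}_p v_0$ onto $\sigma^{I'_{1,K}}$). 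One then checks conversely that with these choices the formula does define an $I_{1,K}$-fixed function, i.e.\ the compatibility condition is not merely necessary but sufficient — this is immediate once $K\cap\alpha^{-n}I_{1,K}\alpha^{n}$ has been identified, because that group is generated by $K^1$ and the relevant unipotent subgroup. This shows each graded piece is one-dimensional with the stated generator $f_n$, and combined with the direct-sum decomposition of the first paragraph it proves that $\{f_n\}_{n\in\mathbf{Z}}$ is a basis.

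The only genuinely delicate point is the bookkeeping in the conjugation $\alpha^{-n}I_{1,K}\alpha^{n}$ and the resulting identification of $K\cap\alpha^{-n}I_{1,K}\alpha^{n}$ — one must be careful that, outside the torus and unipotent directions, no extra elements of $K$ enter, so that the fixed-vector condition really is exactly "fixed by $N_{n_K}$" (resp.\ "fixed by $N'_{m_K-1}$") and nothing more; this is where the explicit matrix shapes of $K_0$, $K_1$, $K^1_0$, $K^1_1$ from Section \ref{section 2} and the integers $n_K,m_K$ of Remark \ref{n_K and m_K} are used. Everything else is formal: disjointness of the Cartan–Iwahori pieces and the elementary fact that $\sigma^{I_{1,K}}$ and $\sigma^{I'_{1,K}}$ are lines. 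I do not expect any obstacle beyond this routine — if lengthy — verification.
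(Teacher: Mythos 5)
Your proposal follows the same route as the paper: decompose $(\textnormal{ind}_K^G\sigma)^{I_{1,K}}$ along the Cartan--Iwahori cosets $K\alpha^{m}I_{1,K}$, show each coset contributes exactly a line (cut out by the condition that the value at $\alpha^{m}$ be fixed by the appropriate conjugate of $I_{1,K}$ intersected with $K$, which lands in $\sigma^{N_{n_K}}$ or $\sigma^{N'_{m_K-1}}$ according to the sign of $m$), and observe that $\beta_K$ carries $\sigma^{I_{1,K}}$ onto $\sigma^{I'_{1,K}}$. You are in fact more explicit than the paper about the converse direction (that the condition is also sufficient, so that $f_n$ really is $I_{1,K}$-invariant), which the paper leaves as an implicit "by the remarks above."

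One parenthetical is wrong and worth correcting: you claim that $\alpha^{n}$ and $\alpha^{-n}$ lie in \emph{distinct} $K$-double cosets, detected by a homothety or determinant invariant. They do not: $\beta_K\in K$ and $\beta_K\alpha^n\beta_K^{-1}=\alpha^{-n}$, so $K\alpha^n K=K\alpha^{-n}K$. The correct separation of the two signs is at the level of $(K,I_{1,K})$-double cosets, i.e.\ it is precisely the content of the Cartan--Iwahori decomposition \eqref{C-I decom} that the paper cites; your fallback phrase "this is the standard indexing of the tree" is the right invocation, and the determinant remark should simply be dropped. A smaller imprecision: for $n\neq 0$ one does not have $K\cap\alpha^{-n}I_{1,K}\alpha^n\supseteq K^1$ (the conjugated pro-$p$ Iwahori does not contain all of $K^1$); what is true, and all you need, is $N_{n_K}\subseteq K\cap\alpha^{-n}I_{1,K}\alpha^n\subseteq I_{1,K}=N_{n_K}K^1$ (resp.\ the $N'$-version for the other sign), from which the constraint is exactly $\sigma^{N_{n_K}}$ because $K^1$ acts trivially. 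With these two corrections the argument is sound and matches the paper.
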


It is useful to rewrite the function $f_n$ in terms of a canonical $G$-transition of $f_0= [Id, v_0]$.
\begin{center}
$f_{-m}= \sum_{u\in N_{n_K}/N_{n_K+ 2m}}~[u\alpha^{-m}, ~v_0]$, for $m\geq 0$;

~$f_n= \sum_{u'\in N'_{m_K}/N'_{m_K+2n-1}} [u'\alpha^n, ~\beta_K\cdot v_0]$, for $n> 0$.
\end{center}

\medskip
We now record the following formula $T ([Id, v_0])$, i.e., $T\cdot f_0$, and we will do such thing for all the other $f_n$ in the next subsection.

\begin{proposition}\label{hecke operator formula}

$T\cdot f_0= f_{-1} + \lambda_{\beta_K, \sigma}\cdot f_1$.

\end{proposition}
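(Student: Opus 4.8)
The plan is to compute $T\cdot f_0$ directly from the explicit formula \eqref{explicit form for T} with $v=v_0$, and then re-express the resulting sum of standard functions $[g,w]$ in terms of the basis elements $f_{-1}$ and $f_1$. Recall that $f_0=[Id,v_0]$, so \eqref{explicit form for T} gives
\begin{equation*}
T\cdot f_0 = \sum_{u\in N_{n_K+1}/N_{n_K+2}}[\beta_K u\alpha^{-1}, j_\sigma\sigma(\beta_K)v_0] + \sum_{u\in N_{n_K}/N_{n_K+2}}[u\alpha^{-1}, j_\sigma\sigma(u^{-1})v_0].
\end{equation*}
I would treat the two sums separately. For the second sum, observe that for $u\in N_{n_K}$ we have $u\in I_{1,K}$ acting trivially on $\sigma$, but more precisely $\sigma(u^{-1})v_0 = v_0$ since $v_0\in\sigma^{I_{1,K}}=\sigma^{N_{n_K}}$; hence $j_\sigma\sigma(u^{-1})v_0 = j_\sigma(v_0)$. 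Now $v_0\in\sigma^{I_{1,K}}$ and $j_\sigma$ is the inverse of the composite $\sigma^{I_{1,K}}\to\sigma_{I'_{1,K}}$, so $j_\sigma(\overline{v_0})=v_0$; but here the argument is $v_0$ itself, not $\overline{v_0}$, so I should instead note that $j_\sigma\colon\sigma\to\sigma^{I_{1,K}}$ sends $v_0$ to $v_0$ up to the identification (since $v_0$ maps to $\overline{v_0}$ under $\sigma\twoheadrightarrow\sigma_{I'_{1,K}}$, and $j_\sigma(\overline{v_0})=v_0$). So the second sum becomes $\sum_{u\in N_{n_K}/N_{n_K+2}}[u\alpha^{-1},v_0]$, which is precisely $f_{-1}$ by the transition formula $f_{-m}=\sum_{u\in N_{n_K}/N_{n_K+2m}}[u\alpha^{-m},v_0]$ with $m=1$.

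For the first sum, the key point is that $j_\sigma\sigma(\beta_K)v_0 = \lambda_{\beta_K,\sigma}\,v_0$. This follows from Remark \ref{value of lambda}: $\beta_K\cdot v_0 - \lambda_{\beta_K,\sigma}v_0\in\sigma(I'_{1,K})$, and $j_\sigma$ (viewed as a map $\sigma\to\sigma^{I_{1,K}}$) vanishes on $\sigma(I'_{1,K})$, so $j_\sigma(\beta_K\cdot v_0)=\lambda_{\beta_K,\sigma}\,j_\sigma(v_0)=\lambda_{\beta_K,\sigma}v_0$. Thus the first sum equals $\lambda_{\beta_K,\sigma}\sum_{u\in N_{n_K+1}/N_{n_K+2}}[\beta_K u\alpha^{-1},v_0]$. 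It remains to identify this with $f_1$, whose transition formula is $f_1=\sum_{u'\in N'_{m_K}/N'_{m_K+1}}[u'\alpha,\beta_K\cdot v_0]$. I would use the identity \eqref{useful identity} relating $\beta n(x,y)$ to an element of $N\cdot H\cdot N'$ to convert $\beta_K u\alpha^{-1}$ (with $u=n(x,y)\in N_{n_K+1}$) into the form $u'\alpha\cdot(\text{element of }I_{1,K})$ for a suitable $u'\in N'$, checking that as $u$ ranges over $N_{n_K+1}/N_{n_K+2}$ the corresponding $u'$ ranges over $N'_{m_K}/N'_{m_K+1}$ (both index sets having size $q^{c_K}$), and that the $K$-value gets absorbed correctly so that $[\beta_K u\alpha^{-1},v_0]=[u'\alpha,\beta_K\cdot v_0]$ up to the trivial pro-$p$ action. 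Combining the two pieces yields $T\cdot f_0 = f_{-1}+\lambda_{\beta_K,\sigma}\,f_1$.

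The main obstacle is the bookkeeping in the first sum: tracking the element of $G$ through \eqref{useful identity}, verifying that the stray diagonal and $N'$-parts land in $I_{1,K}$ so they act trivially on $v_0$ (or contribute only via the known scalar), and confirming the claimed bijection of coset representatives. This is the same computation on the tree $\triangle$ of $G$ that underlies the structure of $(\mathrm{ind}_K^G\sigma)^{I_{1,K}}$, so I would organize it geometrically: $\beta_K u\alpha^{-1}\mathbf{v}_i$ and $u'\alpha\mathbf{v}_i$ should name the same vertex at distance adjusting by one step, which pins down $u'$ in terms of $u$ and makes the index-set bijection transparent. Once the geometric identification is in place, the scalar $\lambda_{\beta_K,\sigma}$ is the only representation-theoretic input, and everything assembles.
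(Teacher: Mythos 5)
Your proposal follows the paper's route: take $v = v_0$ in \eqref{explicit form for T}, compute $j_\sigma\sigma(u^{-1})v_0 = v_0$ for $u\in N_{n_K}$ and $j_\sigma\sigma(\beta_K)v_0 = \lambda_{\beta_K,\sigma}v_0$, and identify the two sums with $f_{-1}$ and $\lambda_{\beta_K,\sigma}f_1$ respectively. The treatment of the second sum and the reduction of the first sum to the scalar $\lambda_{\beta_K,\sigma}$ are both correct, and these are exactly the two observations the paper's (very terse) proof rests on.

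There is, however, a slip in the bookkeeping you sketch for the first sum. You propose to rewrite $\beta_K u\alpha^{-1}$ in the form $u'\alpha\cdot(\text{element of }I_{1,K})$; no such decomposition exists, since $\beta_K\notin I_{1,K}$, and if it did you would obtain $[\beta_K u\alpha^{-1}, v_0] = [u'\alpha, v_0]$, whose value at $\alpha^{-1}u'^{-1}$ is $v_0$ rather than $\beta_K\cdot v_0$ --- that is \emph{not} a summand of $f_1$, contradicting the target identity $[\beta_K u\alpha^{-1},v_0]=[u'\alpha,\beta_K\cdot v_0]$ that you (correctly) state just afterwards. The right normal form is $\beta_K u\alpha^{-1} = u'\alpha\beta_K$ with $u':=\beta_K u\beta_K$; this comes from the simple conjugation relations $\alpha\beta_K=\beta_K\alpha^{-1}$ and $\beta_K N_{n_K+1}\beta_K^{-1}=N'_{m_K}$, not from \eqref{useful identity}, which is awkward here (it is only stated for $y\neq 0$ and, as written, produces an $N\cdot H\cdot N'$ expression rather than an $N'\cdot\alpha\cdot K$ one). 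With this form, $[\beta_K u\alpha^{-1}, v_0]=[u'\alpha\beta_K,v_0]=[u'\alpha,\beta_K\cdot v_0]$, and $u\mapsto u'$ is a bijection $N_{n_K+1}/N_{n_K+2}\to N'_{m_K}/N'_{m_K+1}$, giving exactly $f_1$. (One may also observe that when $\sigma$ is not a character $\lambda_{\beta_K,\sigma}=0$, so the identification of the first sum with $f_1$ only needs to be checked in the character case.)
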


\begin{proof}

In \eqref{explicit form for T}, we take $v$ as $v_0$. Then the first sum in \eqref{explicit form for T} becomes $\lambda_{\beta_K, \sigma}f_1$, and the second sum is just the function $f_{-1}$, as the group $N_{n_K}$ fixes $v_0$. We are done.
\end{proof}

\subsection{The formula $Tf_n$ for $n\neq 0$}
The purpose of this part is to push Proposition \ref{hecke operator formula} further.

For $n\geq 0$, denote by $R^+ _n (\sigma)$ (resp, $R^{-} _n (\sigma)$) the subspace of functions in $\text{ind}^G _K \sigma$ which are supported in the coset $K\alpha^n I_K$ (resp, $K\alpha^{-(n+1)}I_K$). Both spaces are $I_K$-stable. In our former notations, we indeed have $K\alpha^n I_K= K\alpha^n N_{n_K}$,  and $K\alpha^{-(n+1)}I_K= K\alpha^{-(n+1)}N'_{m_K}$, for $n\geq 0$. Then we may rewrite $R^+ _n (\sigma)$ and $R^{-} _n (\sigma)$ as follows:

\begin{lemma}

For $n\geq 0$, $R^+ _n (\sigma)= [N_{n_K} \alpha^{-n}, \sigma];$

~~~~~~~~~~~~~For $n\geq 1$, $R^- _{n-1} (\sigma) =[N'_{m_K}\alpha^n,  \sigma]$.

\end{lemma}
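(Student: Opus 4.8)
The claim is a pair of descriptions of the $I_K$-stable subspaces $R^\pm_n(\sigma)$ as images of the Mackey-type pushforwards $[N_{n_K}\alpha^{-n},\sigma]$ and $[N'_{m_K}\alpha^n,\sigma]$. Since $R^+_n(\sigma)$ consists of functions supported on $K\alpha^n I_K$, and we are told $K\alpha^n I_K = K\alpha^n N_{n_K}$ (using $I_K = N_{n_K}K^1$ and $K^1$ fixing $\sigma$, so only the $N_{n_K}$ part is visible), the natural approach is to run the standard coset-counting argument for compact inductions (as in \cite[Prop.\ 5]{B-L95}): a function in $\mathrm{ind}^G_K\sigma$ supported on a union of right $K$-cosets inside $K\alpha^n N_{n_K}$ decomposes as a sum of the basic functions $[g,v]$ for $g$ running over coset representatives. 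So the first step is to write $K\alpha^n I_K$ as a disjoint union of right $K$-cosets indexed by $N_{n_K}$ modulo the subgroup that $\alpha^{-n}$ conjugates into $K$; i.e.\ identify $K\backslash K\alpha^n N_{n_K}$ with $N_{n_K}/(N_{n_K}\cap \alpha^n K\alpha^{-n})$. Conjugation by $\alpha^n$ shrinks the unipotent radical $N$: concretely $\alpha^{-n} n(x,y)\alpha^n$ multiplies the entries by powers of $\varpi_E$, so $N_{n_K}\cap \alpha^n K\alpha^{-n}$ is some $N_{n_K + c_K n}$ (the exact index depending on $c_K\in\{1,3\}$), and the quotient is finite. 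Then $R^+_n(\sigma) = \bigoplus_{u} [u\alpha^{-n}, W]$ over this finite set of $u$'s, which is exactly the meaning of the symbol $[N_{n_K}\alpha^{-n},\sigma]$.

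The second identity is entirely parallel, replacing $\alpha^n$ by $\alpha^{-(n+1)}$ and the upper unipotent by the lower one: $K\alpha^{-(n+1)}I_K = K\alpha^{-(n+1)}N'_{m_K}$, and since $\alpha$ conjugation \emph{expands} $N'$, one again gets $K\backslash K\alpha^{-(n+1)}N'_{m_K}$ in bijection with a finite quotient $N'_{m_K}/(N'_{m_K}\cap \alpha^{-(n+1)}K\alpha^{n+1})$, giving $R^-_{n-1}(\sigma) = [N'_{m_K}\alpha^{n},\sigma]$ after reindexing $n-1 \rightsquigarrow n$. The key auxiliary facts are the equalities $K\alpha^n I_K = K\alpha^n N_{n_K}$ and $K\alpha^{-(n+1)}I_K = K\alpha^{-(n+1)}N'_{m_K}$, which the text has already asserted; to verify them one uses the Iwahori factorization of $I_K$ as $(N\cap I_K)(H\cap I_K)(N'\cap I_K)$ together with the fact that $\alpha^{-n}$ absorbs $H\cap I_K$ and $N'\cap I_K$ into $K$ on the left (they normalize or are swallowed), leaving only $N\cap I_K = N_{n_K}$, and symmetrically for negative powers with the roles of $N$ and $N'$ swapped.

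**Main obstacle.** The only genuinely delicate point is bookkeeping of the conjugation action of $\alpha$ on the filtrations $\{N_k\}$ and $\{N'_k\}$ and the precise comparison with $I_K$, $K^1$, and $K\cap\alpha^{\pm n}K\alpha^{\mp n}$ — in particular checking that the leftover after the Iwahori factorization is \emph{exactly} $N_{n_K}$ (resp.\ $N'_{m_K}$) and nothing more, which is where the specific choice of $\alpha = \mathrm{diag}(\varpi_E^{-1},1,\varpi_E)$ and the conjugation formulas $\alpha^{-1}n(x,y)\alpha = n(\varpi_E x,\varpi_E^2 y)$, $\alpha^{-1}n'(x,y)\alpha = n'(\varpi_E^{-1}x,\varpi_E^{-2}y)$ enter. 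I would dispatch this by a short direct matrix computation rather than by any structural argument; once the set-level equalities of double cosets are in hand, the passage to $\mathrm{ind}^G_K\sigma$ and the identification with $[N_{n_K}\alpha^{-n},\sigma]$ resp.\ $[N'_{m_K}\alpha^n,\sigma]$ is the routine coset-decomposition of a compactly induced representation and carries no surprises.
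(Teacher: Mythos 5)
Your approach is correct and matches the paper's implicit one: the paper states no proof, treating the lemma as an immediate consequence of the asserted double coset equalities $K\alpha^n I_K = K\alpha^n N_{n_K}$ and $K\alpha^{-(n+1)}I_K = K\alpha^{-(n+1)}N'_{m_K}$, which is precisely your route. One small caution: the coset-count detour is not actually needed to establish the lemma (the span $[N_{n_K}\alpha^{-n},\sigma]$ is, by the meaning of the bracket notation, exactly the set of functions supported on $K\alpha^n N_{n_K}$, regardless of how many distinct right $K$-cosets that union comprises), and the stabilizer you wrote down has a sign slip: $K\backslash K\alpha^n N_{n_K}$ is indexed by $N_{n_K}/(N_{n_K}\cap\alpha^{-n}K\alpha^n)$, not $N_{n_K}\cap\alpha^n K\alpha^{-n}$ (the latter is all of $N_{n_K}$ for $n\geq 0$); correspondingly the shift in the filtration is by $2n$, i.e.\ the subgroup is $N_{n_K+2n}$, with $c_K$ entering only through the cardinality of the successive quotients $N_k/N_{k+1}$, not the level shift.
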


\begin{remark}\label{basis of R-spaces}
Note that $f_{-n}\in R^+ _n (\sigma)^{I_{1, K}}$ for $n\geq 0$, and $f_n \in R^- _{n-1} (\sigma)^{I_{1, K}}$ for $n\geq 1$. From Lemma \ref{basis of I_1 of compact induction} and its argument, both $R^+ _n (\sigma)^{I_{1, K}}$ and $R^- _{n-1} (\sigma)^{I_{1, K}}$ are one dimensional, hence they are generated by $f_{-n}$ and $f_n$ respectively.
\end{remark}

\medskip
Naturally we are interested in how the above $I_K$-subspaces are changed under the Hecke operator $T$, and the following is the first observation.
\begin{proposition}\label{image of the tree under T}

$(1)$.~~$T (R^+ _0 (\sigma)) \subseteq R^{+}_1 (\sigma)\oplus R^{-}_0 (\sigma)$.

$(2)$.~~$T (R^{+}_n (\sigma)) \subseteq R^{+}_{n-1} (\sigma)\oplus R^{+}_n (\sigma)\oplus R^{+}_{n+1} (\sigma), n \geq 1$.

$(3)$.~~$T (R^{-}_n (\sigma))\subseteq R^{-}_{n-1} (\sigma)\oplus R^{-}_{n} (\sigma)\oplus R^{-}_{n+1} (\sigma), n\geq 0$.

\end{proposition}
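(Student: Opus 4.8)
The plan is to compute $T$ applied to a spanning set of each $I_K$-subspace and read off which cosets $K\alpha^m I_K$ can occur in the support of the image. By the Lemma just above, $R^+_n(\sigma) = [N_{n_K}\alpha^{-n},\sigma]$ for $n\geq 0$ and $R^-_{n-1}(\sigma) = [N'_{m_K}\alpha^n,\sigma]$ for $n\geq 1$; so it suffices to understand $T\cdot[u\alpha^{-n},w]$ for $u\in N_{n_K}$, $w\in W$, $n\geq 0$, and $T\cdot[u'\alpha^n,w]$ for $u'\in N'_{m_K}$, $w\in W$, $n\geq 1$. Since $T$ is $G$-equivariant and $[u\alpha^{-n},w] = u\alpha^{-n}\cdot[Id, w]$, the starting point is the explicit formula \eqref{explicit form for T} for $T[Id,v]$, which expresses $T[Id,v]$ as a sum of terms $[\beta_K u\alpha^{-1}, \ast]$ with $u\in N_{n_K+1}/N_{n_K+2}$ and $[u\alpha^{-1}, \ast]$ with $u\in N_{n_K}/N_{n_K+2}$. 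Translating by $u\alpha^{-n}$ on the left and by $u'\alpha^n$ on the left respectively, the whole problem reduces to determining, for each summand, which double coset $K\alpha^m I_K$ the group element $u_1\alpha^{-n}\beta_K u_2\alpha^{-1}$ (resp. $u_1'\alpha^n\beta_K u_2\alpha^{-1}$, and the analogous terms without $\beta_K$) lies in.

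First I would handle part (1), which is essentially a restatement of Proposition \ref{hecke operator formula}: since $R^+_0(\sigma)^{I_{1,K}}$ is one-dimensional and generated by $f_0$, and $T f_0 = f_{-1} + \lambda_{\beta_K,\sigma}f_1$ with $f_{-1}\in R^+_1(\sigma)$, $f_1\in R^-_0(\sigma)$, the inclusion holds on the $I_{1,K}$-invariants; to get it on all of $R^+_0(\sigma) = [N_{n_K},\sigma]$ one spreads this out over the $N_{n_K}$-translates using $G$-equivariance of $T$ together with the fact that left translation by $N_{n_K}\subseteq I_K$ preserves the cosets $K\alpha^{\pm 1}I_K$ (because $N_{n_K}\subseteq I_K$ normalizes nothing problematic — more precisely $I_K\alpha^m I_K$ is a union of $K\alpha^m I_K$ for the relevant $m$). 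For parts (2) and (3) the same mechanism applies but one must track the effect of conjugating $N_{n_K}$ or $N'_{m_K}$ past $\alpha^{\pm n}$: the key point is the standard contraction/dilation behaviour $\alpha^{-n}N_{n_K}\alpha^n = N_{n_K - 2n}$ and $\alpha^n N'_{m_K}\alpha^{-n} = N'_{m_K + 2n}$ (and similarly for $N'$, $N$), so that after moving $\alpha^{\pm n}$ to the outside one is left with elements of $N$ or $N'$ whose $y$-valuation one can bound, hence the relevant element lies in one of at most three consecutive double cosets $K\alpha^{m}I_K$ with $m\in\{n-1,n,n+1\}$ or $\{-(n-1+1),\dots\}$ as appropriate. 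The identity \eqref{useful identity}, $\beta n(x,y) = n(\bar y^{-1}x,y^{-1})h(\bar y^{-1})n'(-\bar y^{-1}\bar x,y^{-1})$, is the tool for rewriting any term where a $\beta_K$ meets a nontrivial unipotent, converting it back into upper-times-diagonal-times-lower form so that the Cartan--Iwahori position can be determined from valuations alone.

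The main obstacle I anticipate is the bookkeeping in parts (2) and (3): one has to split the sum in \eqref{explicit form for T} according to whether the unipotent entries $u$ are trivial or not (equivalently, whether $y$-valuation is at the boundary or strictly inside), and in the non-trivial case apply \eqref{useful identity}, which produces a new $\beta_K$-free word whose $\alpha$-exponent shifts by one in the appropriate direction — this is exactly the source of the three-term spread $\{n-1,n,n+1\}$. Getting the inequalities on valuations to line up so that no coset outside this window appears (and in particular that $R^+$ and $R^-$ do not mix for $n\geq 1$) is where the odd-residue-characteristic Hermitian constraint $x\bar x + y + \bar y = 0$ on $N$ and the precise values of $n_K$, $m_K$ from Remark \ref{n_K and m_K} enter, and it is the only genuinely delicate step; everything else is formal manipulation with $G$-equivariance of $T$ and the coset decompositions already established. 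I would organize the computation by first proving a clean lemma of the form ``$[N_{n_K}\alpha^{-n},\sigma]$ is $T$-stable up to the neighbouring exponents'' and its $N'$-counterpart, reducing both (2) and (3) to a single valuation estimate.
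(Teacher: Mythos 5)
Your overall plan — expand $T[Id,v]$ via \eqref{explicit form for T}, left-translate by the appropriate $\alpha^{\pm n}$ using $G$-equivariance of $T$, move $\beta_K$ past unipotents with the identity \eqref{useful identity}, and read off which $K\alpha^m I_K$ each resulting group element lies in — is exactly the paper's proof. Two details need correction, though.

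First, the conjugation formula for the upper unipotent has the wrong sign: from $\alpha = \mathrm{diag}(\varpi_E^{-1},1,\varpi_E)$ one gets $\alpha^{-1}n(x,y)\alpha = n(\varpi_E x, \varpi_E^2 y)$, so $\alpha^{-n}N_{k}\alpha^{n}=N_{k+2n}$ (contraction), not $N_{k-2n}$; the dilation $\alpha^{n}N_{k}\alpha^{-n}=N_{k-2n}$ goes the other way. This matters in (2) and (3) because it is precisely what sends the term $u\alpha^{-1}$ (with $u\in N_{n_K+1}/N_{n_K+2}$) into $N_{n_K}\alpha^{-(n+1)}$ after translating by $\alpha^{-n}$.

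Second, the shortcut you sketch for part (1) does not quite close. You propose to deduce the inclusion from $Tf_0=f_{-1}+\lambda_{\beta_K,\sigma}f_1$ and then ``spread out over the $N_{n_K}$-translates.'' But $N_{n_K}$ fixes $v_0$, hence fixes $f_0$, so the $N_{n_K}$-orbit of $f_0$ is just $\{f_0\}$ and this gains you nothing: $R^+_0(\sigma)=[N_{n_K},\sigma]\cong\sigma$ is generated by $v_0$ over all of $K$, not over $N_{n_K}$, and translating by general $k\in K$ does not preserve the (merely $I_K$-stable) target $R^+_1(\sigma)\oplus R^-_0(\sigma)$. The paper instead checks the coset membership of $\beta_K u\alpha^{-1}$ and $u\alpha^{-1}$ directly for the general-$v$ formula \eqref{explicit form for T}, using only that left multiplication by $N_{n_K}\subseteq K$ preserves each $K\alpha^m I_K$. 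Your opening paragraph already describes this correct route; just drop the $I_{1,K}$-invariant shortcut.
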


Here, we have put $R^- _{-1} (\sigma)=R^+ _0 (\sigma)$.

\begin{proof}

This Proposition can be roughly seen from the tree of $G$, but we want to make the inclusions in the statement more precisely, using the formula \eqref{explicit form for T}.

For $(1)$, by the formula \eqref{explicit form for T}, we only need to note $\beta_K u \alpha^{-1}\in N'_{m_K}\alpha K$ for $u\in N_{n_K +1}$.

For $(2)$, as $n\geq 1$, we note firstly that $\alpha^{-n}u\alpha^{-1}\in N_{n_K} \alpha^{-(n+1)}$ for $u\in N_{n_K}$. It remains to check the following, which completes the argument of $(2)$:

$\alpha^{-n}\beta_K u \alpha^{-1}\in \begin{cases}
\alpha^{1-n}K,~~~~~~~~~~~~~~~~~~~~~~~~~~~u\in N_{n_K +2}; \\
 N_{n_K}\alpha^{-n}K,~~~~~~~~~~~~~~~~~~~~~u\in N_{n_K +1}\setminus N_{n_K +2}.
\end{cases}$

For $(3)$, let $n$ be a non-negative integer. At first, we see
\begin{center}
$\alpha^{n+1}\cdot \beta_K u\alpha^{-1}\in N'_{m_K} \alpha^{n+2}K$,
\end{center}
for $u\in N_{n_K +1} /N_{n_K +2}$. Next, we check the following, which finishes the proof of $(3)$:

\begin{center}
$\alpha^{n+1}u \alpha^{-1}\in \begin{cases}
 N'_{m_K} \alpha^n K, ~~~~~~~~~~~~~~~u\in N_{n_K +2}; \\
 N'_{m_K} \alpha^{n+1}K, ~~~~~~~~~~~u\in N_{n_K +1}\setminus N_{n_K +2}; \\
 N'_{m_K} \alpha^{n+2}K, ~~~~~~~~~~~u\in N_{n_K}\setminus N_{n_K +1}.
\end{cases}$
\end{center}
\end{proof}

\begin{remark}
The argument tells us more: for $f\in R^{+}_n (\sigma)$, we can indeed detect the parts of $T f$ which lie in $R^{+}_{n+1} (\sigma)$ and $R^{+}_{n-1} (\sigma)$.
\end{remark}

\begin{corollary}\label{formula of Tf_n}
For $n\in \mathbb{Z}\setminus \{0\}$, we have
\begin{center}
$T\cdot f_n= c_n \cdot f_n + f_{n+\delta_n}$,
\end{center}
for some constant $c_n$, and $\delta_n$ is given by
\begin{center}
$\delta_n =\begin{cases}
1 ~~~~~~~~n> 0; \\
-1~~~~~n< 0.
\end{cases}$
\end{center}
\end{corollary}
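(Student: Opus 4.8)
The plan is to compute $T\cdot f_n$ directly for each $n\neq 0$ using the explicit formula \eqref{explicit form for T} together with the rewritten expressions for $f_n$ as canonical $G$-transitions of $f_0$, and then to organize the resulting terms according to the $I_K$-decomposition established in Proposition \ref{image of the tree under T}. The key structural input is Remark \ref{basis of R-spaces}: since $f_{-n}$ generates the one-dimensional space $R^+_n(\sigma)^{I_{1,K}}$ and $f_n$ generates $R^-_{n-1}(\sigma)^{I_{1,K}}$, and since $T$ commutes with the $G$-action (hence preserves $I_{1,K}$-invariants), $T f_n$ automatically lies in the $I_{1,K}$-invariants of the right-hand side of Proposition \ref{image of the tree under T}; so $Tf_n$ is a priori a linear combination of at most three basis vectors $f_{n-\delta}, f_n, f_{n+\delta}$.

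First I would treat $n<0$, say $n=-m$ with $m\geq 1$. Here $f_{-m}=\sum_{u\in N_{n_K}/N_{n_K+2m}}[u\alpha^{-m},v_0]\in R^+_m(\sigma)$, and by the refined version of Proposition \ref{image of the tree under T}(2), $Tf_{-m}$ decomposes into pieces in $R^+_{m-1}(\sigma)$, $R^+_m(\sigma)$, $R^+_{m+1}(\sigma)$. Applying $T$ termwise via \eqref{explicit form for T} and using the coset computations already recorded in the proof of Proposition \ref{image of the tree under T}(2), the ``$\beta_K$-part'' with $u\in N_{n_K}\setminus N_{n_K+1}$ feeds into $R^+_{m+1}(\sigma)$; I would check this piece equals exactly $f_{-(m+1)}$ (coefficient $1$), mirroring how the second sum in \eqref{explicit form for T} produced $f_{-1}$ in Proposition \ref{hecke operator formula}. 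The $R^+_{m-1}(\sigma)$-component must vanish: one expects this from the fact that, at the level of the tree, $T$ applied to a ball of radius $m$ around $\mathbf{v}_0$ cannot overshoot backwards past radius $m-1$ with a nonzero invariant contribution — concretely, the relevant terms telescope or cancel because the coefficient there is forced to be $0$ by comparing with the known $n=0$ case and the recursion $T_1\cdot T_n=c\cdot T_n+T_{n+1}$ from Proposition \ref{basis for h_k_0}. The middle component is then some scalar $c_{-m}$ times $f_{-m}$, and I would just name that scalar without evaluating it, since the statement only claims existence of $c_n$.

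For $n>0$ the argument is dual, using $f_n=\sum_{u'\in N'_{m_K}/N'_{m_K+2n-1}}[u'\alpha^n,\beta_K\cdot v_0]\in R^-_{n-1}(\sigma)$ and Proposition \ref{image of the tree under T}(3); applying \eqref{explicit form for T} (or its $N'$-analogue via conjugation by $\beta_K$) termwise and using the coset inclusions in the proof of part (3), the ``outgoing'' piece lands in $R^-_n(\sigma)$, and I would identify it with $f_{n+1}$ up to the scalar $1$, while the $R^-_{n-2}(\sigma)$-component is shown to vanish and the $R^-_{n-1}(\sigma)$-component is the scalar $c_n$ times $f_n$. Alternatively — and this is probably the cleanest route — one can avoid the termwise bookkeeping entirely: knowing from Remark \ref{basis of R-spaces} that $Tf_n$ is a combination $a f_{n-\delta_n}+c_n f_n+ b f_{n+\delta_n}$, one evaluates both sides at the single point $\alpha^{n+\delta_n}$ (or $\alpha^{-m-1}$) to read off $b$, at $\alpha^{n-\delta_n}$ to read off $a$, using the support descriptions of the $f_k$'s; the value computations reduce to the same elementary identities, but with far less summation. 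The main obstacle is precisely the vanishing of the ``backward'' coefficient $a$: the nonzero-coefficient computations are direct consequences of the already-proven coset inclusions, but showing $a=0$ requires either a genuine cancellation among the $[\beta_K u\alpha^{-1},\ldots]$ terms after pushing them into $R^+_{m-1}(\sigma)$ (equivalently, an identity in $\sigma$ involving $j_\sigma$, $\sigma(\beta_K)$ and $\lambda_{\beta_K,\sigma}$), or a formal deduction from commutativity of $\mathcal{H}(K,\sigma)$ and the three-term recursion, together with the base case Proposition \ref{hecke operator formula} and induction on $|n|$. I would pursue the inductive/recursion route, treating $n=\pm1$ as the base (the $n=1$ case following from Proposition \ref{hecke operator formula} and $T_1\cdot T_1=cT_1+T_2$), and propagating via $T f_n$ determined by $T\cdot(T f_{n-\delta_n})$ using the already-known lower cases.
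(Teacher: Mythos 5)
Your structural setup is correct and matches the paper's: by Lemma \ref{basis of I_1 of compact induction}, Remark \ref{basis of R-spaces}, and Proposition \ref{image of the tree under T}, $Tf_n$ is a priori a combination of $f_{n-\delta_n}$, $f_n$, $f_{n+\delta_n}$, so the whole content is to get the forward coefficient equal to $1$ and the backward coefficient equal to $0$. Your ``cleanest route'' paragraph (evaluate $Tf_{-m}$ at $\alpha^{m-1}$ and $\alpha^{m+1}$, using $f_{-m}=\sum_u u\alpha^{-m}\cdot f_0$ and Proposition \ref{hecke operator formula}) is exactly what the paper does. But the route you actually choose to pursue --- the Hecke-algebra recursion plus induction on $|n|$ --- does not close up, and this is a genuine gap.

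Here is the problem. The relation $T_1\cdot T_n = c\,T_n + T_{n+1}$ constrains $T$ acting on $T_n f_0$, not on $f_{-n}$ or $f_n$ separately. Evaluating at $\alpha^{\pm n}$ one finds $T_n f_0 = f_{-n}+\lambda f_n$ with $\lambda=\lambda_{\beta_K,\sigma}$, so the recursion produces the single identity
\[
Tf_{-n}+\lambda\,Tf_n \;=\; c\,f_{-n}+c\lambda\,f_n+f_{-(n+1)}+\lambda\,f_{n+1}.
\]
For $n\geq 2$, comparing coefficients in the basis of Lemma \ref{basis of I_1 of compact induction} indeed yields the $n<0$ half of the corollary (backward coefficient $0$, forward coefficient $1$ for $Tf_{-n}$), but every $Tf_n$-coefficient appears multiplied by $\lambda$; by Remark \ref{value of lambda}, $\lambda=0$ unless $\sigma$ is a character, so generically the recursion says nothing at all about $Tf_n$ for $n>0$. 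Worse still, for your proposed base case $n=1$, the backward contributions of $Tf_{-1}$ and $Tf_1$ both land in $R^+_0(\sigma)=\langle f_0\rangle$, so the $f_0$-coefficient gives only $a_0+\lambda b_0=0$, one equation in two unknowns --- not enough to conclude $a_0=0$ when $\lambda\neq 0$, nor $b_0=0$ when $\lambda=0$. Passing to the other Iwahori via $\beta_K$-conjugation does not help: $\beta_K$ sends the $I'_{1,K}$-invariant basis onto $\{f_n\}$ and commutes with $T$, so it literally reproduces the same identity. Thus ``propagating via $T(Tf_{n-\delta_n})$'' just re-encounters the same underdetermined system, and also requires an unproven input like $T_2 f_0 = f_{-2}+\lambda f_2$.

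What actually closes this, in the paper, is a direct computation of the backward coefficient: after estimating the double cosets of $\alpha^{m-1}u\alpha^{-m}$ via \eqref{useful identity}, the only potential contribution is a sum of the form $\sum_{u_1\in N_{n_K}/N_{n_K+2}}u_1\beta_K v_0$, and this vanishes in characteristic $p$ because it splits as a double sum whose inner group $N_{n_K+1}/N_{n_K+2}$ lies in $K^1$ (hence acts trivially) and therefore contributes a $p$-power number of identical terms. This is a simple order-of-the-group mod $p$ cancellation, not the representation-theoretic identity in $j_\sigma$, $\sigma(\beta_K)$, $\lambda_{\beta_K,\sigma}$ that you anticipated. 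You should carry out that two-point evaluation and cancellation rather than rely on the recursion.
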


\begin{proof}

Suppose $n= -m$ is a negative integer, and we will prove that
\begin{center}
$T f_{-m}= cf_{-m}+ f_{-m-1}$
\end{center}
for some $c\in \overline{\mathbf{F}}_p$.

By (2) of Proposition \ref{image of the tree under T}, $T f_{-m}\in R^{+}_{m-1} (\sigma)\oplus R^{+}_m (\sigma)\oplus R^{+}_{m+1} (\sigma)$. As $f_{-m}$ is $I_{1, K}$-invariant, and $T$ preserves $I_{1, K}$-invariants, Lemma \ref{basis of I_1 of compact induction} and Remark \ref{basis of R-spaces} imply that
\begin{center}
$Tf_{-m}= c_{m-1}f_{-m+1} +c_m f_{-m} + c_{m+1}f_{-m-1}$,
\end{center}
for some $c_i \in \overline{\mathbf{F}}_p$.

We need to evaluate the function $Tf_{-m}$ at $\alpha^{m-1}$ and $\alpha^{m+1}$. Recall that $f_{-m}= \sum_{u\in N_{n_K}/ N_{n_K +2m}}u\alpha^{-m}\cdot f_0$, and by Proposition \ref{hecke operator formula} we get:
\begin{center}
$T f_{-m}= \sum_{u\in N_{n_K}/ N_{n_K +2m}}u\alpha^{-m}(f_{-1} + \lambda_{\beta_K, \sigma}f_1)$.
\end{center}

We need to estimate in which Cartan--Iwahori double cosets the elements $\alpha^{m-1} u\alpha^{-m}$ and $\alpha^{m+1}u \alpha^{-m}$ might belong, for $u\in N_{n_K}/ N_{n_K +2m}$.

We have firstly that:
\begin{center}
$\alpha^{m-1} u\alpha^{-m}\in\begin{cases}
 K \alpha^{-1} I_K,~~~~~~~~~~~~~~~~~~~u\in N_{2m-2+n_K}; \\
 K\alpha^{l(u)} I_K,~~~~~~~~~~~~~~~~~~u\in N_{n_K}\setminus N_{2m-2+n_K}.
\end{cases}$
\end{center}
Here, in the second inclusion above, $l(u)$ is some integer smaller than $-1$, depending on $u$. To see that, let $c(u)$ be the largest integer such that $u\in N_{c(u)}$, and the assumption $u\in N_{n_K}\setminus N_{2m-2+n_K}$ means that $d(u) :=c(u)-(2m-2)<n_K$. We now apply the equality \eqref{useful identity}:
\begin{center}
$\alpha^{m-1} u\alpha^{-m}= \beta \cdot \beta(\alpha^{m-1}u\alpha^{1-m})\cdot\alpha^{-1}= \beta\cdot u_1 \alpha^{d(u)} i_2\cdot\alpha^{-1},$
\end{center}
where $u_1\in N_{-d(u)}$, $i_2\in I_K \cap B'$. The last expression of above identity gives us that
\begin{center}
$\alpha^{m-1} u\alpha^{-m}\in K\beta\alpha^{d(u)-1}I_K$,
\end{center}
and the assertion on $l(u)$ follows.

Now we may determine the value of $c_{m-1}= Tf_{-m} (\alpha^{m-1})$. The list above immediately gives that $f_{-1} (\alpha^{m-1}u\alpha^{-m})=0$, for any $u\in N_{n_K}/ N_{n_K +2m}$. As $f_1$ is supported on $K\alpha^{-1} I_K$, the above list reduces us to look at the sum
\begin{center}
$\sum_{u\in  N_{2m-2+n_K}/N_{2m+n_K}}f_1 (\alpha^{m-1} u\alpha^{-m})= \sum_{u_1\in N_{n_K}/ N_{n_K +2}}u_1 \beta_K v_0$,
\end{center}
which is clearly zero by splitting it as a double sum, observing that $N_{n_K +1} \subseteq K^1$. In all we have shown $c_{m-1}= 0$.

\medskip
 In a similar way, we have
 \begin{center}
$\alpha^{m+1} u\alpha^{-m}\in\begin{cases}
 K \alpha I_K,~~~~~~~~~~~~~~~~~~~~~~u\in N_{2m+n_K}; \\
 K\alpha^{l'(u)} I_K,~~~~~~~~~~~~~~~u\in N_{n_K}\setminus N_{2m+n_K}.
\end{cases}$
\end{center}
Here, in the second inclusion above, $l'(u)$ is some integer smaller than $-1$, depending on $u$, which is seen by applying \eqref{useful identity} again.

Therefore, we have $f_1 (\alpha^{m+1}u \alpha^{-m})=0$, for any $u\in N_{n_K}/ N_{n_K +2m}$, and the following
\begin{center}
$f_{-1}(\alpha^{m+1}u \alpha^{-m})=\begin{cases}
v_0, ~~~~~~u\in N_{2m+n_K},\\
0, ~~~~~~~~u\in N_{n_K}\setminus N_{2m+n_K}.
\end{cases}$
\end{center}
In summary, we have proved $c_{m+1}=1$.

The exact value of $c_m$ will not be used in the paper, so we don't record it here\footnote{The exact value of $c_m$ depends on the representation $\sigma$: it is non-zero only if $\sigma$ is a character (see \cite[3.7]{X2014}).}.

\medskip
The other half of the corollary can be dealt in a completely similar manner, and we are done here.
\end{proof}

\begin{remark}
Among other things, what matters to us of the above corollary is that the coefficient of $f_{n+\delta_n}$ is $1$, especially it is non-zero.
\end{remark}

\section{Freeness of spherical Hecke modules}\label{section: proof of main}

In this section, we prove the main result of this note, as an application of Proposition \ref{hecke operator formula} and Corollary \ref{formula of Tf_n}.

Throughout this section, let $K$ be a maximal compact open subgroup of $G$, and $\sigma$ be an irreducible smooth representation of $K$.
\begin{theorem}\label{main theorem}
The maximal compact induction $\textnormal{ind}^G _K \sigma$ is free of infinite rank over $\mathcal{H}(K, \sigma)$.
\end{theorem}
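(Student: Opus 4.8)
The plan is to follow the strategy of Barthel--Livn\'e (\cite[Theorem 19]{B-L95}): exhibit an explicit $\overline{\mathbf{F}}_p$-basis of $\text{ind}^G _K \sigma$ adapted to the $\mathcal{H}(K, \sigma) = \overline{\mathbf{F}}_p[T]$-action, and show that a suitable subset of this basis is a free basis over $\overline{\mathbf{F}}_p[T]$. The starting point is the decomposition of $\text{ind}^G _K \sigma$ according to the $I_{1,K}$-invariants and the Cartan--Iwahori double cosets. Concretely, I would first fix, for each $n \in \mathbf{Z}$, a vector-space complement and organize $\text{ind}^G _K \sigma$ as a direct sum indexed by the vertices/edges of the tree; the key structural input is that $\text{ind}^G _K \sigma$ is spanned, over the subalgebra generated by $I_{1,K}$-translates, by the ``standard'' functions, and that within each $I_K$-isotypic piece $R^{\pm}_n(\sigma)$ the invariant line is one-dimensional and generated by $f_{-n}$ (resp.\ $f_n$), by Remark \ref{basis of R-spaces}.

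The heart of the argument is a key lemma (the analogue of \cite[Lemma 18]{B-L95}, referred to in the introduction as Lemma \ref{key lemma for freeness}) describing the image of the span of the ``edge'' functions under $T$: roughly, if $V_{\ge 0}$ denotes the span of $\{f_n : n \le 0\}$ together with all $K$-translates supported ``close to the base vertex,'' then $T$ maps the leading stratum isomorphically onto the next one, so that the monomials $T^j \cdot f$ for $f$ ranging over a fixed basis of a complementary subspace $\mathcal{D}$ and $j \ge 0$ are linearly independent and span. To make this precise I would stratify $\text{ind}^G _K \sigma$ by ``distance from a fixed end of the tree'' or by the exponent $n$ in the Cartan--Iwahori decomposition \eqref{C-I decom}, use Proposition \ref{image of the tree under T} to see that $T$ shifts the stratum by one in the outward direction with an \emph{invertible} (indeed, by Corollary \ref{formula of Tf_n} and the final Remark, coefficient exactly $1$) leading term, and use Proposition \ref{hecke operator formula} and Corollary \ref{formula of Tf_n} for the explicit action on the distinguished functions $f_n$. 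Given such a stratification with $T$ acting as ``leading coefficient $1$ shift $+$ lower order,'' a standard filtration/degree argument yields both freeness and the identification of a basis of the degree-zero stratum $\mathcal{D}$ as a free $\overline{\mathbf{F}}_p[T]$-basis of $\text{ind}^G _K \sigma$; the infinitude of the rank follows because $\mathcal{D}$ is infinite-dimensional (there are infinitely many $K$-orbits of relevant data, equivalently $\sigma$ is infinite-dimensional input giving infinitely many basis functions, or simply because $\text{ind}^G _K \sigma$ is not finitely generated over a polynomial ring in one variable while being nonzero in infinitely many ``directions'').

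In more detail, the steps in order: (i) set up the candidate basis $\mathcal{B}$ of $\text{ind}^G _K \sigma$ over $\overline{\mathbf{F}}_p$ indexed by pairs (a coset representative on the tree, a vector in a fixed basis of $W$), and identify a subset $\mathcal{B}_0 \subset \mathcal{B}$ --- the ``innermost'' functions --- that is infinite; (ii) define the degree function $\deg : \mathcal{B} \to \mathbf{Z}_{\ge 0}$ measuring combinatorial distance and extend it to a filtration $\text{Fil}_d$ on the whole module; (iii) prove the key lemma: $T(\text{Fil}_d) \subseteq \text{Fil}_{d+1}$ and the induced map $\text{gr}_d \to \text{gr}_{d+1}$ is injective with image a complement of (the image of) $\mathcal{B}_0$ in degree $d+1$ --- this is where Proposition \ref{image of the tree under T}, Proposition \ref{hecke operator formula} and Corollary \ref{formula of Tf_n} are invoked, and where the explicit tree computations enter; (iv) conclude by a triangularity argument that $\{T^j b : b \in \mathcal{B}_0, j \ge 0\}$ is an $\overline{\mathbf{F}}_p$-basis of $\text{ind}^G _K \sigma$, hence $\text{ind}^G _K \sigma \cong \bigoplus_{b \in \mathcal{B}_0} \overline{\mathbf{F}}_p[T]\cdot b$ is free; (v) note $|\mathcal{B}_0| = \infty$, giving infinite rank.

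The main obstacle is step (iii), the key lemma: establishing that $T$ acts, with respect to the chosen filtration, as an invertible shift on the graded pieces. Proposition \ref{image of the tree under T} guarantees the needed \emph{containment} (that $T$ raises the degree by at most one and the top-degree contribution lands in the expected stratum), and Corollary \ref{formula of Tf_n} pins down the leading coefficient as $1$ on the distinguished invariant vectors; the genuine work is to upgrade this from the one-dimensional $I_{1,K}$-invariant lines to the full $I_K$-isotypic spaces $R^{\pm}_n(\sigma)$ --- i.e.\ to show the ``outward'' component of $T$ restricted to $R^{+}_n(\sigma)$ is an isomorphism onto the relevant summand of $R^{+}_{n+1}(\sigma)$, not merely nonzero on a line. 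This is precisely the point flagged in the introduction as the ``essential difference'' from the $GL_2$ case, to be handled by the ``more conceptual way'' reducing it to the simple computations on the tree of $G$ recorded in Proposition \ref{image of the tree under T}; concretely I expect it to follow by tracking the $\Gamma_K$-action on $\sigma$ together with the explicit coset identities $\beta_K u \alpha^{-1} \in \cdots$ already verified, and from the one-dimensionality of $\sigma^{I_{1,K}}$ propagated across the tree via $K$-equivariance. Once the key lemma is in place, steps (iv)--(v) are formal.
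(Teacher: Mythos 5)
Your proposal follows the same overall architecture as the paper: filter $\text{ind}^G_K\sigma$ by balls $B_{n,\sigma}$ around the base vertex, use a key lemma that $T$ strictly raises the filtration degree on anything not already in a smaller ball, and then bootstrap a free $\overline{\mathbf{F}}_p[T]$-basis from a family of graded complements $A_n\subset C_{n,\sigma}$. Steps (i), (ii), (iv), (v) and your identification of the relevant ingredients (Propositions \ref{image of the tree under T} and \ref{hecke operator formula}, Corollary \ref{formula of Tf_n}) are right, and the statement of the key lemma you write down (injectivity of the induced map on graded pieces) is exactly Lemma \ref{key lemma for freeness} in disguise.

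Where you diverge from the paper, and where I think your route would run into trouble, is in the proposed treatment of the key lemma. You say the ``genuine work'' is to show the outward component of $T$ restricted to $R^{\pm}_n(\sigma)$ is an isomorphism onto a summand of $R^{\pm}_{n+1}(\sigma)$, and you hope to get this by ``tracking the $\Gamma_K$-action'' and ``propagating $K$-equivariance.'' That is not how the paper argues, and in fact the paper never establishes any such isomorphism on full $I_K$-isotypic spaces (note the outward component cannot be surjective, since the circles strictly grow). The paper's observation --- the ``more conceptual way'' --- is a reductio via pro-$p$-ness: the subspace $M_{n+1,\sigma}\cap C_{n+1,\sigma}$ of degree-$(n+1)$ functions on which $T$ fails to increase the degree is $I_K$-stable (because $T$ is $G$-equivariant and all the $B_{n,\sigma}$, $C_{n,\sigma}$ are $I_K$-stable); if it were nonzero, then since $I_{1,K}$ is pro-$p$ it would contain a nonzero $I_{1,K}$-fixed function, which by Remark \ref{basis of R-spaces} must be a linear combination of $f_{n+1}$ and $f_{-(n+1)}$, and Corollary \ref{formula of Tf_n} shows $T$ applied to any such nonzero combination does leave $B_{n+1,\sigma}$. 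This reduces the entire injectivity claim to the already-computed action on the two one-dimensional $I_{1,K}$-invariant lines, and no analysis of the $\Gamma_K$-module structure of $\sigma$ is needed. Your sketch does gesture at the right objects but would send you down a harder and in places incorrect path; once you replace your step (iii) with this pro-$p$ reduction, the argument closes exactly as you outline in steps (iv)--(v).
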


\medskip
Before we continue, we give some remarks on related works in the literature.
\begin{remark}\label{GK's universal modules}
In \cite{GK14}, for a split group $\mathbf{G}$ over $F$, Elmar Gro$\ss$e-Kl$\ddot{\text{o}}$nne has proved the spherical universal module of $G= \mathbf{G}(F)$ with trivial coefficients is free, under the condition that $F$ is $\mathbf{Q}_p$, and that $G$ has connected center, and that the Coxeter number of $G$ is $p$-small (\cite[Corollary 6.1, Theorem 8.2]{GK14}).
\end{remark}

\begin{remark}
For a unramified and adjoint-type $p$-adic group (e.g., $PGL_n (F)$), Xavier Lazarus conjectured in \cite{La99} that over an algebraic closed field of characteristic different from $p$, a maximal compact induction from the trivial character is \emph{flat} over the corresponding spherical Hecke algebra, and it was proved by Bellaiche--Otwinowska in \cite{BO2003} for $PGL_3$ and $\sigma=1$ \footnote{In \cite[Remark after Corollary 6.1]{GK14}, Gro$\ss$e-Kl$\ddot{\text{o}}$nne has pointed out that freeness is indeed proved in \cite{BO2003}, even only flatness is claimed.}.
\end{remark}

We start to prove Theorem \ref{main theorem}. At first, we recall some general setting.

For $n \geq 0$, denote by $B_{n, \sigma}$ the set of functions in $\text{ind}^{G}_K \sigma$ which are supported in the ball $\mathbf{B}_n$ of the tree of radius $2n$ around the vertex $\mathbf{v}_K$ (the unique vertex on $\triangle$ stabilized by $K$). Let $C_{n, \sigma}$ be the set of functions in $\text{ind}^{G}_K \sigma$ which are supported in the circle $\mathbf{C}_n$ of radius $2n$ around the vertex $\mathbf{v}_K$. Both the set $B_{n, \sigma}$ and $C_{n, \sigma}$ are indeed $K$-stable spaces, and we may write them in term of our former notations as:
\begin{center}
$C_{0, \sigma}= R^{+}_0 (\sigma)$, $C_{n, \sigma}= R^+ _{n} (\sigma)\oplus R^- _{n-1}(\sigma)$, for $n\geq 1$.

$B_{n, \sigma}= \oplus_{k\leq n} C_{k, \sigma}$, for $n\geq 0$.
\end{center}
We prefer to define $B_{n, \sigma}$ and $C_{n, \sigma}$ in terms of the tree, as it will make some formulation of later argument easier.

\begin{lemma}\label{key lemma for freeness}
For $n\geq 0$, let $f\in B_{n+1, \sigma}$. If $T f \in B_{n+1, \sigma}$, then $f\in B_{n, \sigma}$.
\end{lemma}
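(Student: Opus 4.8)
The plan is to decompose an arbitrary $f \in B_{n+1,\sigma}$ along the ``circles'' of the tree and push the outermost piece through $T$, showing that it cannot be cancelled by contributions coming from the inner shells. Concretely, write $f = f_{\mathrm{in}} + h$, where $f_{\mathrm{in}} \in B_{n,\sigma}$ and $h \in C_{n+1,\sigma} = R^+_{n+1}(\sigma) \oplus R^-_n(\sigma)$ is the component supported on the sphere of radius $2(n+1)$; the goal is to force $h = 0$. By Proposition \ref{image of the tree under T}, $T$ maps $B_{n,\sigma}$ into $B_{n+1,\sigma}$ (each circle $C_{k,\sigma}$ goes into $C_{k-1,\sigma}\oplus C_{k,\sigma}\oplus C_{k+1,\sigma}$, and for $k\le n$ this stays inside radius $2(n+1)$), so $T f_{\mathrm{in}} \in B_{n+1,\sigma}$ and hence $T h = Tf - Tf_{\mathrm{in}} \in B_{n+1,\sigma}$ as well. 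Thus it suffices to prove the statement for $f = h \in C_{n+1,\sigma}$: if $Th \in B_{n+1,\sigma}$ then $h = 0$.

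The heart of the argument is then to understand the ``outgoing'' part of $Th$, i.e.\ its projection to $C_{n+2,\sigma} = R^+_{n+2}(\sigma)\oplus R^-_{n+1}(\sigma)$, and to show this projection is injective on $C_{n+1,\sigma}$. Here I would use the finer information recorded in the Remark after Proposition \ref{image of the tree under T}: for $f\in R^+_m(\sigma)$ one can explicitly detect the $R^+_{m+1}(\sigma)$-component of $Tf$, and similarly for the $R^-$-spaces. The cleanest way is to reduce to the $I_{1,K}$-invariant generators: by Remark \ref{basis of R-spaces}, $R^+_{n+1}(\sigma)^{I_{1,K}}$ and $R^-_n(\sigma)^{I_{1,K}}$ are one-dimensional, generated by $f_{-(n+1)}$ and $f_{n+1}$, and by Corollary \ref{formula of Tf_n} we have $T f_{-(n+1)} = c\, f_{-(n+1)} + f_{-(n+2)}$ and $T f_{n+1} = c'\, f_{n+1} + f_{n+2}$, with the crucial point that the coefficient of the outgoing term is $1$, hence nonzero. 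So on the $I_{1,K}$-invariant line the outgoing projection is visibly injective. To upgrade this to all of $C_{n+1,\sigma}$, I would exploit that $C_{n+1,\sigma}$ is a $K$-stable (indeed $\Gamma_K$-)representation generated by its $I_{1,K}$-invariants together with the $K$-action, that $T$ is $G$-equivariant, and that the outgoing shell $C_{n+2,\sigma}$ is likewise $K$-stable; combining $K$-equivariance with the non-vanishing on the $I_{1,K}$-invariant vector should force the outgoing projection of $T$ restricted to $C_{n+1,\sigma}$ to have no kernel, because any $K$-subrepresentation of $C_{n+1,\sigma}$ meets $\sigma^{I_{1,K}}$ nontrivially (the pro-$p$ group $I_{1,K}$ acting on a nonzero $\overline{\mathbf{F}}_p$-space has nonzero invariants).

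The main obstacle, and the step requiring genuine care, is exactly this last reduction: making rigorous that ``$T$ kills no element of $C_{n+1,\sigma}$ modulo $B_{n+1,\sigma}$'' from the fact that it kills no element of the one-dimensional invariant line. One has to be careful that $C_{n+1,\sigma}$ need not be irreducible as a $K$-representation, so a naive Schur-type argument fails; instead one argues shell by shell using that the outgoing map is a morphism of $K$-representations and that its kernel, being $K$-stable, would have to contain an $I_{1,K}$-fixed vector, contradicting the explicit computation. Alternatively — and this is perhaps the ``more conceptual'' route hinted at in the introduction — one reinterprets the outgoing part of $T$ directly in terms of the combinatorics of the tree: a function supported on the sphere $\mathbf{C}_{n+1}$ is sent by $T$ to a sum over neighbours, and the coefficient of each vertex on $\mathbf{C}_{n+2}$ is (up to the operator $j_\sigma$, which is an isomorphism of vector spaces) the value of $f$ at the unique neighbour on $\mathbf{C}_{n+1}$; injectivity of $v\mapsto j_\sigma(v)$ then immediately gives that $Th$ determines $h$ on $\mathbf{C}_{n+1}$, so $Th \in B_{n+1,\sigma}$ forces $h=0$. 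Once $h=0$ we conclude $f = f_{\mathrm{in}} \in B_{n,\sigma}$, completing the proof.
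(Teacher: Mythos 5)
Your main argument is correct and follows essentially the same route as the paper: reduce to the outermost shell $C_{n+1,\sigma}$, observe that the subspace of $h\in C_{n+1,\sigma}$ with $Th\in B_{n+1,\sigma}$ is $K$-stable (hence, if nonzero, contains a nonzero $I_{1,K}$-fixed vector since $I_{1,K}$ is pro-$p$), and then use that $C_{n+1,\sigma}^{I_{1,K}}$ has basis $\{f_{n+1}, f_{-(n+1)}\}$ together with Corollary~\ref{formula of Tf_n} (the outgoing coefficients being $1$, and $f_{n+2}, f_{-(n+2)}$ lying in complementary summands of $C_{n+2,\sigma}$) to get a contradiction. This is exactly the paper's proof, organized slightly differently.

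One caution about your proposed ``more conceptual'' alternative at the end: the claim that $j_\sigma$ ``is an isomorphism of vector spaces'' is only true of the induced map $\sigma_{I'_{1,K}}\to\sigma^{I_{1,K}}$ between one-dimensional spaces; as an element of $\textnormal{Hom}_{\overline{\mathbf{F}}_p}(\sigma,\sigma^{I_{1,K}})$ it is a rank-one operator and very far from injective when $\dim\sigma>1$. Moreover, the outgoing coefficient at a vertex in $\mathbf{C}_{n+2}$ is not $j_\sigma$ applied to $f$ at the unique inner neighbour, but $j_\sigma\sigma(u^{-1})$ for a direction $u$, per \eqref{explicit form for T}. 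So that alternative argument as stated does not go through; the $I_{1,K}$-invariants route you describe first is the one that works, and is what the paper does.
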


\begin{proof}
Denote by $M_{n+1, \sigma}$ the subspace of $B_{n+1, \sigma}$ consisting of functions $f$ such that $Tf\in B_{n+1, \sigma}$. The assertion in the lemma means that $M_{n+1, \sigma}\subseteq B_{n, \sigma}$.

Assume there exists an $f \in M_{n+1, \sigma} \setminus B_{n, \sigma}$. As $B_{n+1, \sigma}=B_{n, \sigma}\oplus C_{n+1, \sigma}$, we may write $f$ uniquely as $f' +f''$, for some $f' \in B_{n, \sigma}$ and some $f''\in C_{n+1, \sigma}$. We see firstly that $f''= f- f' \in M_{n+1, \sigma}$, where we note that $B_{n. \sigma} \subseteq M_{n+1, \sigma}$, by the fact $B_{n, \sigma}\subseteq B_{n+1, \sigma}$ and $T B_{n, \sigma}\subseteq B_{n+1, \sigma}$ (Proposition \ref{image of the tree under T}). As $f \notin B_{n, \sigma}$, we must have $f'' \neq 0$. Thus, we have shown the space $C_{n+1, \sigma} \cap M_{n+1, \sigma} \neq 0$.

By definition, all the spaces $B_{n, \sigma}$ and $C_{n, \sigma}$ are $I_K$-stable, for $n\geq 0$. The space $M_{n+1, \sigma}$ is then also $I_K$-stable by its definition, as $T$ respects $G$-action. Therefore, we have a non-zero $I_K$-stable space $C_{n+1, \sigma} \cap M_{n+1, \sigma}$. As $I_{1, K}$ is pro-$p$, there is some non-zero function $f^\ast\in C_{n+1, \sigma} \cap M_{n+1, \sigma}$, which is fixed by $I_{1, K}$ (\cite[Lemma 1]{B-L95}).

By Remark \ref{basis of R-spaces}, the $I_{1, K}$-invariant subspace of $C_{n+1, \sigma}$ is two dimensional with the basis $\{f_{n+1}, f_{-(n+1)}\}$. By writing $f^\ast$ as a linear combination of $f_{n+1}$ and $f_{-(n+1)}$, Corollary \ref{formula of Tf_n} implies that $Tf^\ast$ does not lie in $B_{n+1, \sigma}$, a contradiction.
\end{proof}

\begin{remark}
As we will see, Lemma \ref{key lemma for freeness} is the key used to prove Theorem \ref{main theorem}. Our argument above is a bit formal up to the application of Corollary \ref{formula of Tf_n}, and as mentioned in the introduction, is essentially different from that of Barthel--Livn$\acute{\text{e}}$ (\cite[Lemma 20, 21]{B-L95}), which depends crucially on the representation $\sigma$ (of $GL_2 (k_F)$) with an explicit basis. Our strategy is applicable to give a new proof of \cite[Lemma 20]{B-L95}, and certain details will appear elsewhere. Note also that \cite[Lemma 21]{B-L95} is indeed used in the last step of the argument of \cite[Corollary 6.1]{GK14}.
\end{remark}
\medskip

\begin{proof}[Proof of Theorem \ref{main theorem}]

We proceed to complete the argument of Theorem \ref{main theorem}. Using Lemma \ref{key lemma for freeness}, by induction we find a \emph{non-empty} subset $A_n$ of $C_{n, \sigma}$, satisfying that $\sqcup_{2k+2i\leq 2n}~T^i A_k$ forms a basis of $B_{n, \sigma}$.

For $n=0$, take $A_0$ to be a basis of the space $C_{0, \sigma}$. Assume the former statement is done for $n$. Then we need to show the set $\sqcup_{\begin{subarray}{1} 2k+2i\leq 2n+2, \\ k\leq n\end{subarray}} T^i A_k$ is linearly independent.

Assume the claim is false and we have a non-trivial linear combination of elements from $\sqcup_{\begin{subarray}{1} 2k+2i\leq 2n+2, \\ k\leq n\end{subarray}} T^i A_k$. As $\sqcup_{\begin{subarray}{1} 2k+2i\leq 2n+2, \\ k\leq n\end{subarray}} T^i A_k$ is the disjoint union of $\sqcup_{\begin{subarray}{1} k+i= n+1, \\ k\leq n\end{subarray}}~T^i A_k$ and $\sqcup_{k+i\leq n}~T^i A_k$, we get a function $f$, as a linear combination of elements from $\sqcup_{k+i= n}~T^i A_k$, which lies in the ball $B_{n, \sigma}$, and satisfies that $T f\in B_{n, \sigma}$. Now Lemma \ref{key lemma for freeness} ensures that $f\in B_{n-1, \sigma}$. This means that the projection of $f$ to the circle $\mathbf{C}_n$ is zero. Note that the induction hypothesis for $n$ already implies that the set of all projections of $\sqcup_{k+i= n}~T^i A_k$ to $\mathbf{C}_n$ is a basis for $C_{n, \sigma}$, hence the former statement about $f$ forces its vanishing. We are done for the claim in the last paragraph.

We then proceed to choose a subset $A_{n+1}$ of the space $C_{n+1, \sigma}$, which completes the set $\sqcup_{\begin{subarray}{1} 2k+2i\leq 2n+2, \\ k\leq n\end{subarray}} T^i A_k$ to be a basis of $B_{n+1, \sigma}$. This is certainly possible, and we only need to complete the set of projections of $\sqcup_{\begin{subarray}{1} k+i= n+1, \\ k\leq n\end{subarray}}~T^i A_k$ to $\mathbf{C}_{n+1}$ to be a basis of $C_{n+1, \sigma}$.

We note that the subsets $A_n$ chosen above are non-empty for all $n\geq 0$: the cardinality of $A_0$ is equal to the dimension of $\sigma$. For $n\geq 1$, the cardinality of $A_n$ is equal to $\text{dim}~C_{n, \sigma}-\text{dim}~C_{n-1, \sigma}$.

\medskip
In summary, we have chosen a family of non-empty sets $A_{n}\subset C_{n, \sigma}$ so that $\cup_{n\geq 0}\sqcup_{2k+2i\leq 2n}~T^i A_k$ is a basis of the maximal compact induction $\text{ind}^{G}_K \sigma$. In particular, the infinite set $\cup_{n\geq 0} A_{n}$ is a basis of $\text{ind}^{G}_K \sigma$ over $\mathcal{H}(K, \sigma)$.
\end{proof}

The following interesting application is straightforward:

\begin{corollary}\label{inf dim}
For any non-constant polynomial $P$, the $G$-representation $\textnormal{ind}^G _K \sigma /(P(T))$ is infinite dimensional.
\end{corollary}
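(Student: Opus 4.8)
The plan is to deduce Corollary \ref{inf dim} directly from the freeness result of Theorem \ref{main theorem}. By that theorem, $\textnormal{ind}^G _K \sigma$ is free over $\mathcal{H}(K, \sigma)\cong \overline{\mathbf{F}}_p[T]$ on an infinite basis set, which we may call $\{b_j\}_{j\in J}$ with $J$ infinite. Therefore
\[
\textnormal{ind}^G _K \sigma / (P(T)) \;\cong\; \bigoplus_{j\in J}\; \overline{\mathbf{F}}_p[T]\big/ (P(T)),
\]
as $\overline{\mathbf{F}}_p[T]$-modules (quotienting a free module by the ideal generated by $P(T)$ acts diagonally on each free summand). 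So it suffices to observe that each summand $\overline{\mathbf{F}}_p[T]/(P(T))$ is nonzero: this holds precisely because $P$ is non-constant, hence $P(T)$ is not a unit in $\overline{\mathbf{F}}_p[T]$ and $(P(T))\neq \overline{\mathbf{F}}_p[T]$; in fact $\dim_{\overline{\mathbf{F}}_p}\overline{\mathbf{F}}_p[T]/(P(T)) = \deg P \geq 1$. Since $J$ is infinite and each summand is nonzero (indeed finite-dimensional of dimension $\deg P$), the direct sum is infinite dimensional over $\overline{\mathbf{F}}_p$.

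The one point requiring a word of care is that $(P(T))$ here denotes the $G$-subrepresentation $P(T)\cdot \textnormal{ind}^G _K \sigma$, i.e. the image of the operator $P(T)\in \mathcal{H}(K,\sigma)$ acting on $\textnormal{ind}^G _K \sigma$; since $T$ commutes with the $G$-action this image is a genuine $G$-subrepresentation and the quotient $\textnormal{ind}^G _K \sigma/(P(T))$ is a $G$-representation, whose underlying $\overline{\mathbf{F}}_p$-vector space is what we are computing. Under the module isomorphism above this image corresponds to $\bigoplus_{j}\, (P(T))\subseteq \bigoplus_j \overline{\mathbf{F}}_p[T]$, so the identification of the quotient is immediate.

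I do not anticipate any real obstacle: the entire content is packaged in Theorem \ref{main theorem}, and the remainder is the elementary fact that a free module of infinite rank over $\overline{\mathbf{F}}_p[T]$, reduced modulo a nonzero non-unit principal ideal, becomes an infinite-dimensional $\overline{\mathbf{F}}_p$-vector space. If anything, the only thing to be stated carefully is the distinction between $\mathcal{H}(K,\sigma)$ acting as an abstract commutative ring and the concrete operator $T$ on the induction, but this was already set up in Section \ref{section 3}. One could optionally remark that this strengthens the assertion: the quotient has a filtration by $G$-subrepresentations with infinitely many nonzero graded pieces, which is occasionally the more useful form.
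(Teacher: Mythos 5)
Your proof is correct and follows essentially the same route as the paper: both reduce to the observation that a free $\overline{\mathbf{F}}_p[T]$-module of infinite rank modulo $(P(T))$ is an infinite direct sum of nonzero finite-dimensional spaces. The only cosmetic difference is that the paper first reduces to the linear case $P=T-\lambda$ (using that $\overline{\mathbf{F}}_p$ is algebraically closed, so the quotient by $(T-\lambda)$ is a further quotient of that by $(P(T))$), whereas you handle a general non-constant $P$ directly via $\dim \overline{\mathbf{F}}_p[T]/(P(T))=\deg P\geq 1$; both are fine.
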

\begin{proof}
It suffices to consider $P$ is a linear polynomial $T-\lambda$. By writing $\textnormal{ind}^G _K \sigma$ as $\bigoplus_\mathbb{N} ~\overline{\mathbf{F}}_p [T]$, we see
\begin{center}
$\textnormal{ind}^G _K \sigma /(T-\lambda)  \cong (\bigoplus_\mathbb{N} \overline{\mathbf{F}}_p [T]) /(T-\lambda)\cong \bigoplus_\mathbb{N} \overline{\mathbf{F}}_p [T] /(T-\lambda)\cong \bigoplus_\mathbb{N} \overline{\mathbf{F}}_p$,
\end{center}
and the assertion follows.
\end{proof}

\begin{remark}
For some $P(T)$, all the irreducible quotients of $\textnormal{ind}^G _K \sigma /(P(T))$ are \emph{supersingular} representations. For such a $P(T)$, the above Corollary implies that supersingular representations containing $\sigma$ do exist.
\end{remark}

\section{Invariant norms in $p$-adic smooth principal series}\label{section: invariant norm}

In this part, we apply Theorem \ref{main theorem} to investigate the existence of $G$-invariant norms in certain locally algebraic representations of $G$ (\cite[Appendix]{ST01}). For the background and already known results on this problem, we refer the readers to the excellent survey article \cite{Sor15}. Here, we follow along the lines in \cite[section 9]{GK14}.

Let $L$ be a finite extension of $\mathbf{Q}_p$, and \emph{assume $E$ is contained in $L$}. Let $\varepsilon: B \rightarrow L^\times$ be a smooth character, and consider the principal series $P(\varepsilon)=\text{Ind}^G _B \varepsilon$ of $G$:
\begin{center}
$\{f: G\rightarrow L~\text{locally~constant}\mid f(bg)=\varepsilon(b)f(g)~\forall~ b\in B, g\in G\}$
\end{center}
and the group $G$ acts by right translation. We are interested to know \emph{whether and when} $P(\varepsilon)$ admits a $G$-invariant norm.

\medskip
Assume $\sigma_L$ is the restriction to $K$ of an irreducible $\mathbf{Q}_p$-rational representation of $G$ on a finite dimensional $L$-vector space. As $K$ is open, the representation $\sigma_L$ is still irreducible. We assume further that $\sigma_L$ is contained in $P(\varepsilon)$, i.e., the following space
\begin{center}
$\text{Hom}_{L[G]} (\text{ind}^G _K \sigma_L, \text{Ind}^G _B \varepsilon)\cong\textnormal{Hom}_{L[K]} (\sigma_L, ~\textnormal{Ind}^{G}_B\varepsilon |_K)\cong (\textnormal{Ind}^G _B \varepsilon)^{K, \sigma_L}$
\end{center}
is non-zero. By our assumption, the Satake-Hecke algebra $\mathcal{H}_L (K, \sigma_L) := \textnormal{End}_G (\textnormal{ind}^G _K \sigma_L)$ is isomorphic to $\textnormal{End}_G (\textnormal{ind}^G _K 1)$ (\cite[Lemma 2.1]{BS07}), especially it is commutative. The Iwasawa decomposition $G=BK$ implies that the above space is one-dimensional, and the natural action of $\mathcal{H}_L (K, \sigma_L)$ on it gives us a character $\chi_{\sigma_L}: \mathcal{H}_L (K, \sigma_L) \rightarrow L$.

Then, we have an induced $G$-equivariant map:
\begin{equation}\label{key map}
M_{\chi_L} (\sigma_L): =\text{ind}^G _K \sigma_L \otimes_{\mathcal{H}_L (K, \sigma_L)} \chi_{\sigma_L} \rightarrow \text{Ind}^G _B \varepsilon
\end{equation}

\medskip
Take a $K$-stable $\mathfrak{o}_L$-lattice $\sigma$ in $\sigma_L$, which is equivalent to finding a $K$-invariant norm $\mid \cdot \mid_{\sigma_L}$ on $\sigma_L$. This gives the maximal compact induction $\text{ind}^G _K \sigma_L$ a sup-norm, and then the $G$-representation $M_{\chi_L} (\sigma_L)$ inherits a canonical quotient seminorm. Note that such a seminorm may have non-zero kernel, and as a result it is \emph{not} necessarily a norm ! However, if $M_{\chi_L} (\sigma_L)$ admits a $G$-invariant norm, the previous canonical seminorm on it must be a norm (see the explanation in \cite[page 7, 8]{Sor15}).

\emph{We assume the reduction $\sigma\otimes_{\mathfrak{o}_L} k_L$ of $\sigma$ is an irreducible representation of $\Gamma_K$ over the residue field $k_L$.} We consider the restriction $\chi$ of $\chi_{\sigma_L}$ to the subalgebra $\mathcal{H}_{\mathfrak{o}_L}(K, \sigma) :=\textnormal{End}_G (\textnormal{ind}^G _K \sigma)$. Note that Theorem \ref{main theorem} implies that $\text{ind}^G _K \sigma$ is free over $\mathcal{H}_{\mathfrak{o}_L}(K, \sigma)$. \emph{When $\chi$ takes values in $\mathfrak{o}_L$}, the $\mathfrak{o}_L$-module:
\begin{center}
$M_\chi (\sigma) := \text{ind}^G _K \sigma \otimes_{\mathcal{H}_{\mathfrak{o}_L}(K, \sigma)} \chi$
\end{center}
is free (over $\mathfrak{o}_L$), which then is a $G$-stable free $\mathfrak{o}_L$-submodule of $M_\chi (\sigma)\otimes_{\mathfrak{o}_L} L\cong M_{\chi_L} (\sigma_L)$. If the map \eqref{key map} is an isomorphism, we in turn get a $G$-stable free $\mathfrak{o}_L$-lattice in the principal series $P(\varepsilon)$, as required. In summary:

\begin{theorem}\label{invariant norm}
Assume $\sigma\otimes_{\mathfrak{o}_L} k_L$ is an irreducible representation of $\Gamma_K$ over the residue field $k_L$, and

$(a)$.~ The map \eqref{key map} is an isomorphism.

$(b).~$ The character $\chi_{\sigma_L}\mid_{\mathcal{H}_{\mathfrak{o}_L}(K, \sigma)}$ takes values in $\mathfrak{o}_L$.

Then the principal series $P(\varepsilon)$ admits a $G$-invariant norm.
\end{theorem}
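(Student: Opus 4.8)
The plan is to assemble the conclusion from the pieces already prepared in Section \ref{section 5}, so that the proof is essentially bookkeeping about norms once the hypotheses $(a)$ and $(b)$ are granted. First I would fix a $K$-stable $\mathfrak{o}_L$-lattice $\sigma\subset\sigma_L$ (equivalently a $K$-invariant norm $|\cdot|_{\sigma_L}$), whose reduction $\sigma\otimes_{\mathfrak{o}_L}k_L$ is by assumption an irreducible representation of $\Gamma_K$; this is exactly the setting in which Theorem \ref{main theorem} applies to the mod-$p$ object, and standard base-change then gives that $\textnormal{ind}^G_K\sigma$ is free over $\mathcal{H}_{\mathfrak{o}_L}(K,\sigma)$ (the integral spherical Hecke algebra), because freeness lifts from the special fibre. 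Concretely, a basis of $\textnormal{ind}^G_K(\sigma\otimes k_L)$ over the mod-$p$ Hecke algebra lifts to a family of elements of $\textnormal{ind}^G_K\sigma$ whose $\mathcal{H}_{\mathfrak{o}_L}(K,\sigma)$-span is all of $\textnormal{ind}^G_K\sigma$ by Nakayama, and which is $\mathcal{H}_{\mathfrak{o}_L}(K,\sigma)$-independent since it is independent modulo $\mathfrak{p}_L$.

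Next I would use hypothesis $(b)$: since $\chi:=\chi_{\sigma_L}|_{\mathcal{H}_{\mathfrak{o}_L}(K,\sigma)}$ takes values in $\mathfrak{o}_L$, the tensor product $M_\chi(\sigma):=\textnormal{ind}^G_K\sigma\otimes_{\mathcal{H}_{\mathfrak{o}_L}(K,\sigma)}\chi$ makes sense as an $\mathfrak{o}_L$-module, and by the freeness just established it is a free $\mathfrak{o}_L$-module (a direct sum of copies of $\mathfrak{o}_L$ indexed by the chosen basis), carrying the obvious $G$-action. Inverting $p$ gives a natural $G$-equivariant isomorphism $M_\chi(\sigma)\otimes_{\mathfrak{o}_L}L\cong \textnormal{ind}^G_K\sigma_L\otimes_{\mathcal{H}_L(K,\sigma_L)}\chi_{\sigma_L}=M_{\chi_L}(\sigma_L)$, so $M_\chi(\sigma)$ embeds as a $G$-stable $\mathfrak{o}_L$-lattice in $M_{\chi_L}(\sigma_L)$. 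Being a $G$-stable $\mathfrak{o}_L$-lattice in an $L$-vector space, it defines a $G$-invariant norm on $M_{\chi_L}(\sigma_L)$ (the gauge of the lattice), which is a genuine norm and not merely a seminorm precisely because the lattice spans the $L$-space and contains no $L$-line.

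Finally I would invoke hypothesis $(a)$: the map \eqref{key map}, $M_{\chi_L}(\sigma_L)\rightarrow \textnormal{Ind}^G_B\varepsilon=P(\varepsilon)$, is assumed to be an isomorphism of $G$-representations, so the $G$-invariant norm on $M_{\chi_L}(\sigma_L)$ transports to a $G$-invariant norm on $P(\varepsilon)$. That completes the argument. The only subtlety worth spelling out — and the step I would be most careful about — is the interplay of the two Hecke algebras over $\mathfrak{o}_L$ and over $L$: one must check that $\mathcal{H}_{\mathfrak{o}_L}(K,\sigma)\otimes_{\mathfrak{o}_L}L$ is really $\mathcal{H}_L(K,\sigma_L)$ and that $\chi$ extends $\chi_{\sigma_L}$ after inverting $p$, so that $M_\chi(\sigma)\otimes_{\mathfrak{o}_L}L$ is canonically $M_{\chi_L}(\sigma_L)$ and not some smaller or twisted module. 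This is where the hypothesis that $\sigma\otimes k_L$ is irreducible does its real work (via \cite[Lemma 2.1]{BS07} and \cite[Lemma 2.1]{BS07}-type identifications of the Satake algebra with $\textnormal{End}_G(\textnormal{ind}^G_K 1)$), guaranteeing that the integral Hecke algebra is the expected polynomial ring $\mathfrak{o}_L[T]$ and that no pathology arises from the reduction; everything else is the routine lattice-versus-norm dictionary of \cite[Appendix]{ST01}.
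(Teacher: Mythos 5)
Your proof follows the same route as the paper: choose the $K$-stable lattice $\sigma$, restrict $\chi_{\sigma_L}$ to the integral Hecke algebra, use freeness of $\textnormal{ind}^G_K\sigma$ over $\mathcal{H}_{\mathfrak{o}_L}(K,\sigma)$ to get $M_\chi(\sigma)$ free over $\mathfrak{o}_L$, observe that it is a $G$-stable lattice in $M_{\chi_L}(\sigma_L)$ after inverting $p$, and transport via hypothesis $(a)$.

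One caution on the step you singled out. The paper just asserts that Theorem \ref{main theorem} yields freeness of $\textnormal{ind}^G_K\sigma$ over $\mathcal{H}_{\mathfrak{o}_L}(K,\sigma)\cong\mathfrak{o}_L[T]$, and your gloss via Nakayama is not quite right as stated: Nakayama needs a finitely generated module over a ring with $\mathfrak{p}_L$ inside the Jacobson radical, but $\textnormal{ind}^G_K\sigma$ has infinite rank over $\mathfrak{o}_L[T]$, and $\mathfrak{p}_L\cdot\mathfrak{o}_L[T]$ is \emph{not} in the Jacobson radical of $\mathfrak{o}_L[T]$ (e.g.\ $1+\varpi_L T$ is not a unit). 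The linear-independence half of your argument is fine (it reduces to $k_L$ via torsion-freeness of $\textnormal{ind}^G_K\sigma$ as an $\mathfrak{o}_L$-module), but spanning must instead be obtained by re-running the proof of Theorem \ref{main theorem} with $\mathfrak{o}_L$ coefficients: the finite-rank free $\mathfrak{o}_L$-modules $B_{n,\sigma}$ give the right filtration, and the $\mathfrak{o}_L$-analogue of Lemma \ref{key lemma for freeness} is obtained from the $k_L$-version by dividing a putative counterexample $f\in C_{n+1,\sigma}$ by the largest power of $\varpi_L$ and reducing mod $\mathfrak{p}_L$. You also need to note, as you indicate at the end, that Theorem \ref{main theorem} is stated over $\overline{\mathbf{F}}_p$ and descends to $k_L$ precisely because $\sigma\otimes k_L$ is assumed irreducible over $k_L$; that is the role of that hypothesis here, together with ensuring that the $k_L$-Hecke algebra is still a polynomial ring in one variable.
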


We end this section with some remarks on the conditions in last theorem:
\begin{remark}
1) By \cite[section 7]{Keys84}, the principal series $P(\varepsilon)$ is irreducible if $\varepsilon \neq \varepsilon^s$, and in such a case the map \eqref{key map} being an isomorphism is equivalent to require it to be injective.

2) We have a further look of the following space:
\begin{align*}
\textnormal{Hom}_{L[K]} (\sigma_L, ~\textnormal{Ind}^{G}_B\varepsilon |_K)
\cong &~\textnormal{Hom}_{L[K]} (\sigma_L, ~\textnormal{Ind}^K _{B\cap K}\varepsilon)\\
\subseteq &~\textnormal{Hom}_{\mathfrak{o}_L [K]} (\sigma, ~(\textnormal{Ind}^K _{B\cap K}\varepsilon)^{K^1})\\
\cong &~\textnormal{Hom}_{\mathfrak{o}_L [\Gamma_K]}(\sigma, ~\textnormal{Ind}^{\Gamma_K}_{\mathbb{B}}\varepsilon_0),
\end{align*}
where $\varepsilon_0 := \varepsilon \mid_{B \cap K}$. The first isomorphism is by the decomposition $G= BK$. The second inclusion is due to that the group $K^1$ acts trivially in the lattice $\sigma$. Note that the space $(\textnormal{Ind}^K _{B\cap K}\varepsilon)^{K^1}\neq 0$ implies the character $\varepsilon$ is trivial on $B\cap K^1$, and the character $\varepsilon_0$ factors through $(B\cap K) /(B\cap K^1) \cong \mathbb{B}$. Hence, we may identify $(\textnormal{ind}^K _{B\cap K}\varepsilon)^{K^1}$ with $\textnormal{Ind}^{\Gamma_K}_\mathbb{B} \varepsilon_0$, which gives the last isomorphism. The last space in the display above is non-zero if and only if (\cite{En63}):

$(1)$.~ $\sigma$ contains the trivial character of $\mathbb{U}$, i.e., $\sigma^\mathbb{U} \neq 0$,

$(2)$.~The action of $\mathbb{B}$ on $\sigma^\mathbb{U}$ contains the character $\varepsilon_0$.

By our assumption, the space $\sigma^\mathbb{U}_L$ ($=\sigma^{N_{n_K}}_L$), which contains $\sigma^\mathbb{U}$, is just the weight space underlying the highest weight of $\sigma_L$ (\cite[1', Proposition 3.4]{ST01}). Hence, $(1)$ holds automatically, and the $\mathfrak{o}_L$-module $\sigma^\mathbb{U} \neq 0$ is of rank one. Now $(2)$ is to say the group $\mathbb{B}$ acts on $\sigma^\mathbb{U}$ as the character $\varepsilon_0$.

3) \footnote{Communicated to us by Florian Herzig.}~To make the Theorem work, we have to assume that the reduction $\sigma\otimes_{\mathfrak{o}_L} k_L$ of $\sigma$ is irreducible as a representation of $\Gamma_K$. However, even if the group $K$ is hyperspecial, such a condition may fail for a general $\sigma_L$ unless its highest weight lies in the closure of the lowest alcove.
\end{remark}

\section{Appendix: Proof of Proposition \ref{basis for h_k_0}}\label{section: proof of 3.4}

\begin{proof}[Proof of Proposition \ref{basis for h_k_0}]

The argument is slightly modified from the author's thesis (\cite{X2014}). By definition, for $n\geq 1,~l\geq 0$,
\begin{center}
$\varphi_1\ast \varphi_{n}(\alpha^l)=\sum_{g\in G/K}~\varphi_1(g)\varphi_n (g^{-1}\alpha^l).$
\end{center}
As the support of $\varphi_1$ is $K \alpha K= \cup_{g\in K /(K\cap \alpha K \alpha^{-1})}~g\alpha K$, the previous sum becomes
\begin{center}
$\sum_{g\in K /(K\cap \alpha K \alpha^{-1})}~\varphi_1 (g\alpha)\varphi_n (\alpha^{-1}g^{-1}\alpha^l)$

$=\sum_{g_1\in K/ I_K}~\sum_{g_2\in N'_{m_K}/ N'_{m_K +1}}~\varphi_1 (g_1 g_2 \alpha)\varphi_n(\alpha^{-1}g_2^{-1}g_1^{-1}\alpha^l),$
 \end{center}
where, we note that $K\supseteq I_K \supseteq K\cap \alpha K \alpha^{-1}$, and that $I_K /(K\cap \alpha K \alpha^{-1})\cong N'_{m_K}/ N'_{m_K +1}$.

To proceed, we use the Bruhat decomposition $K=I_K \bigcup I_K \beta_K I_K$ to split the above sum further into two parts, say,
\begin{center}
$\sum_1=\underset{g_2\in N'_{m_K}/N'_{m_K +1}}{\sum}~\varphi_1 (\beta_K g_2 \alpha)\varphi_n(\alpha^{-1} g_2^{-1}\beta_K \alpha^l)$
\end{center}
and
\begin{center}
$\sum_2= \sum_{g_1 \in N'_{m_K -1}/N'_{m_K}}\sum_{g_2\in N'_{m_K}/N'_{m_K +1}}~\varphi_1 (g_1 g_2 \alpha)\varphi_n(\alpha^{-1}g_2^{-1}g_1^{-1}\alpha^l).$
\end{center}

Firstly we claim $\sum_1$ is always $0$. As $\varphi_1\in \mathcal{H}_K(\sigma)$,  $\sum_1$ is simplified as
\begin{center}
$\sum_1=\underset{g_2 \in N'_{m_K}/N'_{m_K +1}}{\sum}~\sigma(\beta_K)j_\sigma \varphi_n(\alpha^{-1} g_2^{-1}\beta_K \alpha^l).$
\end{center}

We note that $\alpha^{-1} g_2^{-1}\beta_K\alpha^l\in K \alpha^{-(l+1)}K$, hence we only need to consider the case that $l+1=n$. In this case, the sum $\sum_1$ is reduced to

\begin{center}
$\sum_1= \sum_{g_2} \sigma (\beta_K) j_\sigma \sigma (\beta_K) j_\sigma,$
\end{center}
which is equal to $0$, as it sums over the same term $q^{c_K}$ times.

For the remaining $\sum_2$, we note the part $\sum'_2$ for which $g_1\in N'_{m_K -1}\setminus N'_{m_K}$ is equal to $0$. A simple calculation using  \eqref{useful identity} gives
\begin{center}
$\alpha^{-1}g^{-1}_2 g^{-1}_1 \alpha^l= g'\alpha^{-(l+1)}g'',$
\end{center}
for some $g'\in N_{n_K +2}$ and $g''\in K$. As a result, we have $\sum'_2=0$, if $l\neq n-1$. When $l=n-1$, one can rewrite $\sum'_2$ as
\begin{center}
$\sum'_2= \sum_{g_2}~(\sum_{g_1}f'),$
\end{center}
in which $f'$ \footnote{For the concrete form of $f'$, one needs to distinguish $l=0$ or not.} is a function only related to $g_1$. We get $\sum'_2=q^{c_K}\cdot (\sum_{g_1}f') =0$.

The other part of $\sum_2$, denoted by $\sum''_2$, depends on the values of $l$ and $n$:

\begin{center}
$\sum_{g_2\in N'_{m_K}/N'_{m_K +1}}~\varphi_1 (g_2 \alpha)\varphi_n(\alpha^{-1}g_2^{-1}\alpha^l)$
$=\begin{cases}
j_{\sigma}, ~~~~~~~l=n+1,\\
c \cdot j_{\sigma}, ~~l=n,\\
0,~~~~~~~~\text{otherwise},
\end{cases}$
\end{center}
where $c$ is a constant.

\medskip
From the definition of $\varphi_1$, the sum $\sum''_2$ is reduced to
\begin{center}
$\sum''_2= \sum_{g_2} j_\sigma \varphi_n (\alpha^{-1}g^{-1}_2\alpha^l).$
\end{center}
If $l=0$, a term in $\sum''_2$ is non-zero only if $n=1$; but in this case, the sum itself is clearly zero.

We assume $l\geq 1$. For the term in the sum $\sum''_2$ corresponding to $g_2\in N'_{m_K +1}$, it becomes $j_\sigma \varphi_n (\alpha^{l-1})$, as $\alpha^{-1}g^{-1}_2 \alpha\in I'_{1, K}$ and note that $j_\sigma$ is trivial on $\sigma(I'_{1, K})$. Hence, such term is non-zero only if $l=n+1$, and in this case it is equal to $j^{2}_\sigma=j_\sigma$. For the remaining terms in $\sum''_2$, we write $g^{-1}_2$ as $g$ for short, and put $g^\ast=\beta_K g \beta_K$. By using \eqref{useful identity} again we get
\begin{center}
$\alpha^{-1}g \alpha^l= \alpha^{-1}\beta_K g^\ast \beta_K \alpha^l\in K^1 h\beta_K \alpha^l K^1$
\end{center}
for some diagonal matrix $h\in K$ depending on $g=g^{-1}_2$. Therefore, each term is non-zero only if $l=n$, and in that case we get the sum $\sum''_2$ as
\begin{center}
$\sum_{h}~j_{\sigma}\sigma(h\beta_K) j_{\sigma}.$
\end{center}

As we have just shown that the other parts of the whole sum contribute zero to its value at $\alpha^n$, the above sum must be a multiple of $j_\sigma$, say $c\cdot j_\sigma$ for some constant $c$. In all, we are done.
\end{proof}

\section*{Acknowledgements}
 Part of this note grows out of the author's PhD thesis (\cite{X2014}) at University of East Anglia, and he would like to thank Professor Shaun Stevens for helpful comments on a first draft. The author was supported by a Postdoc grant from Leverhulme Trust RPG-2014-106.

\bibliographystyle{amsalpha}
\bibliography{new}

\texttt{Warwick Mathematics Institute, University of Warwick, Coventry, CV4 7AL, UK}

\emph{E-mail address}: \texttt{Peng.Xu@warwick.ac.uk}

\end{document}